\documentclass{scrartcl}
\scrollmode
\usepackage{amsmath, amsthm, amssymb,amsfonts,mathtools,bbm,stmaryrd}
\usepackage{verbatim,geometry,graphicx,multirow,array}
\usepackage{algpseudocode}
\usepackage{enumerate}
\usepackage{xcolor}
\usepackage{xy,psfrag}
\usepackage{datetime}

\usepackage{amsmath,amsthm,amssymb,amsfonts,xcolor}
\usepackage{graphicx, caption, subcaption, listings, float}
\usepackage{multirow,enumitem, hyperref, tabularx}
\usepackage{array,algorithm, algpseudocode}
\usepackage{makecell, setspace}
\usepackage{enumitem}
\usepackage{eucal}

\usepackage{xcolor, mathtools}
\usepackage{tabularx,colortbl}

\newcommand{\dd}{\mathrm d}

\newcommand{\R}{{\mathbb R}}

\newcommand{\diag}{\operatorname{diag}}

\renewcommand{\ss}{\scriptstyle}

\def\sddots{\mathinner{\raise3pt\vbox{\hbox{$\ss .$}}
		\raise1.5pt\hbox{$\ss .$}\hbox{$\ss .$}}}

\theoremstyle{plain}
\newtheorem{thm}{Theorem}[section]
\newtheorem{lem}[thm]{Lemma}

\newtheorem{prop}[thm]{Proposition}

\theoremstyle{definition}
\newtheorem{rem}[thm]{Remark}

\usepackage{biblatex}
\newcommand{\icol}[1]{
  \left(\begin{smallmatrix}#1\end{smallmatrix}\right)%
}

\graphicspath{{figures}}

\title{Ordinary differential equations for regularized variational problems involving semi-discrete optimal transport }
\author{
Adrien Cances\thanks{Universit\'e Paris-Saclay, CNRS, Laboratoire de math\'ematiques d'Orsay, ParMA, Inria Saclay, 91405, Orsay, France. 
email: adrien.cances@universite-paris-saclay.fr},\;
Luca Nenna\thanks{Universit\'e Paris-Saclay, CNRS, Laboratoire de math\'ematiques d'Orsay, ParMA, Inria Saclay, 91405, Orsay, France. 
email: luca.nenna@universite-paris-saclay.fr} \thanks{Institut Universitaire de France (IUF)},\;
Daniyar Omarov \thanks{Department of Mathematical and Statistical Sciences, 632 CAB, University of Alberta, Edmonton, Alberta, Canada, T6G 2G1.
email: daniyar@ualberta.ca}
  and  Brendan Pass \thanks{Department of Mathematical and Statistical Sciences, 632 CAB, University of Alberta, Edmonton, Alberta, Canada, T6G 2G1.
email: pass@ualberta.ca}}
\date{\today}

\begin{document}

\maketitle

\begin{abstract}
We consider entropically regularized, semi-discrete versions of variational problems on the set of probability measures involving optimal transport as well as other terms.  We prove that the solutions can be characterized by well-posed ordinary differential equations in the regularization parameter.  The initial conditions for these equations, corresponding to solutions to completely regularized problems, are typically known explicitly.  The ODE can then be solved to recover the solution for an arbitrary degree of regularization; we verify that the solution is continuous in the regularization parameter, implying that taking the limit of the trajectory yields the solution to the fully unregularized problem. We establish analogous results for a version of the problem when the non-optimal transport term is not scaled with the regularization parameter.  We exploit our characterization to numerically solve several example problems using standard ODE methods; this strategy exhibits superior robustness to alternatives such as Newton's method, as arbitrary initializations are not required.
\end{abstract}
\section{Introduction}\label{sec:intro}

Optimal transport (OT) provides a mathematically rigorous framework for comparing and interpolating probability distributions by computing the minimum cost of moving mass from one measure to another.  This simply stated problem enjoys deep connections to a wide variety of mathematical fields, including analysis, geometry, partial differential equations and probability, and a far-reaching suite of applications, in areas such as statistics and machine learning, economics, meteorology and operations research, among others; see, for instance, the monographs  \cite{villani2009optimal,ambrosio2008gradient, Santambrogio2015} for a detailed review.

In this paper, we are interested in class of variational problems involving optimal transport but also other terms.  These problems can be generally formulated as follows:

\begin{equation}\label{eqn: general variational problems}
  \inf_{\substack{\nu\in\mathcal{P}(Y)\\
  \gamma\in\Gamma(\mu,\nu)}}
  \int_{X\times Y} c\;\mathrm{d}\gamma  
  + \,F(\nu)\ ,
\end{equation}
where $\mu \in \mathcal{P}(X)$ is a prescribed probability measure on a set $X$, $\nu$ a free probability measure on a set $Y$, $\Gamma(\mu,\nu) \subset \mathcal{P}(X \times Y)$ the set of couplings of $\mu$ and $\nu$, $c: X \times Y \rightarrow \mathbb{R}$ a given cost function and $F$ a given potential function.  For different forms of $F$ and $c$, this problem models a wide variety of phenomena, including:

\begin{itemize}
    \item When $c(x,y) =\lambda |x-y|^2$ is quadratic on $\mathbb{R}^n \times \mathbb{R}^n$ with $\lambda \in [0,1]$ and $F(\nu)=(1-\lambda)W_2^2(\nu,\rho)$ is $1-\lambda$ times the squared Wasserstein distance between $\nu$ and a second fixed probability measure $\rho \in \mathcal{P}(\mathbb{R}^n)$, minimizers are McCann's displacement interpolants, parametrized by $\lambda$ \cite{McCann1997}. 

    \item Again for $c(x,y) =\frac{1}{2\tau} |x-y|^2$, the minimization represents one step in the discrete time Wasserstein gradient flow of $F$, which, for suitable choices of $F$, are equivalent to various PDE, as established by Jordan, Kinderlehrer, and Otto \cite{jordan1998variational} and systematized in the influential monograph of Ambrosio, Gigli, and Savaré \cite{ambrosio2008gradient}.
 
    \item Cournot-Nash equilibria in game theory, where $c(x,y)$ models preferences of agents $x \in X$ for available strategies $y \in Y$ and the first differential of $F$ represents a cost agents pay for taking the given strategy (which depends on the distribution $\nu$ of strategies the other agents take as well) \cite{BlanchetCarlier2014, BlanchetCarlier2014a, BlanchetCarlier2016}.

    \item In urban planning, where $\mu$ represents a population distribution, $\nu$ a distribution of locations of a given service, $c(x,y)$ the transport cost to move from $x$ to $y$ and $F$ a functional to be minimized (taking into account minimizing congestion, for example) \cite{ButtazzoSantambrogio2005, CarlierSantambrogio2005}.

    \item In hedonic price theory, when $c(x,y)$ represents the cost to a seller $x$ to produce a good $y$, $F(\nu)=\inf_{\pi \in \Pi(\nu,\rho)}\int -b(y,z)d\pi(z,y)$ is the transport cost between $\nu$ and a fixed distribution $\rho$ of buyers with respect to transport cost $-b$, where $b(z,y)$ represents the preference of buyer $z$ for good $y$ \cite{COCV_2005__11_1_57_0}.
\end{itemize}
In practice, when computing solutions, it is often desirable to discretize the state space $Y$, making the measures $\nu$ discrete and the optimal transport problem represented by the first term in \eqref{eqn: general variational problems} semi-discrete.  We note, in addition, that assuming the set $Y$ is discrete is actually natural in some of the applications described above; for instance, in Cournot-Nash problems, there may be only a finite set of possible strategies, while in hedonic pricing problems, there may be only a finite number of different goods which can be traded.  An advantage of semi-discrete optimal transport problems (and, as we will show, of the semi-discrete version of the variational variant \eqref{eqn: general variational problems} as well), is that the problem amounts to a finite dimensional convex optimization problem; see, for instance, the detailed treatment in \cite{MerigotThibert2021}. Efficient Newton-type solvers were developed in \cite{kitagawa2019convergence,levy2015numerical}, and the regularity of Laguerre cells with respect to the dual potential was analyzed in \cite{de2019differentiation}

Another, extremely pervasive,  tool when computing solutions to optimal transport problems is entropic regularization, in which adding an entropic penalty to the discrete transport problem yields a strongly convex objective solvable by Sinkhorn iterations, at a cost several orders of magnitude lower than solving the original linear program \cite{cuturi2013sinkhorn}. In the semi-discrete setting, for the quadratic cost, convergence rates as the regularization parameter vanishes were obtained by Delalande \cite{delalande2022nearly}, who also derived an ODE satisfied by the family of solutions to the semi-discrete entropic problem.  Hiew, Nenna, and Pass \cite{hiew2024ode,nenna2025ode} established that, for fully discrete marginals and general cost functions, the unique solution to the entropically regularized OT problem is in fact \emph{fully characterized} by a well-posed initial-value problem (IVP). Solving this IVP numerically yields a new algorithm whose output interpolates between the fully regularized and unregularized solutions. Nenna, Omarov, and Pass \cite{nenna2025characterizing} proved an analogous result for semi-discrete problems and established a uniform lower bound on the smallest eigenvalue of the Hessian of the dual functional, which is crucial for both theoretical well-posedness and numerical stability.

In this paper, we consider an entropically regularized, semi-discrete version of \eqref{eqn: general variational problems}.  Our main goal, analogous to the work in \cite{hiew2024ode} and \cite{nenna2025characterizing} in the straight optimal transport problem, is to characterize the solution (which now depend on the regularization parameter $1-t$) and exploit it to compute solutions.  As in those works, the initial condition, at $t=0$, can often be readily found in closed form, and, unlike the Sinkhorn algorithm, solving the ODE yields simultaneously the entire curve of solutions.  As we will show, taking the limit as $t \rightarrow 1$ gives the solution to the unregularized problem. 

\subsection*{Contributions}

The main contributions of this paper are as follows.

\begin{enumerate}
  \item \textbf{Uniform boundedness of dual solutions.} We show that the family $\{\psi(t)/t : t \in (0,1)\}$,  where $\psi$ solves the dual \eqref{eq:dual} of our regularized semi-discrete variational problem is uniformly bounded whenever $F^{*}$ is $m$-strongly convex (Lemma \ref{lem:uniformly_bounded_solutions} and Proposition \ref{prop:bounded_nu}). 

  \item \textbf{Well-posedness of the governing ODE (scaled penalty).} By differentiating the first-order optimality condition for problem~\eqref{eq:dual} via the implicit function theorem, we obtain the ODE~\eqref{eq:ODE} satisfied by the trajectory $t \mapsto \psi(t)$. Though this ODE becomes singular at $t=0$, we prove that the corresponding IVP \eqref{eqn: cauchy for z} for $z(t):=\psi(t)/t$  is globally well-posed on $[0,1)$ and so $\psi(t)=tz(t)$, where $z(t)$ is its unique solution, coincides with the unique maximizer of~\eqref{eq:dual} for each $t$ (Theorem~\ref{thm:well-posed0}).

  \item \textbf{Well-posedness for the parameter-independent penalty.} An analogous well-posedness result is established for the variant problem~\eqref{P1:primal} in which $F(\nu)$ is not scaled by $t$, yielding the Cauchy problem~\eqref{cauchy1} (Theorem~\ref{thm:well-posed1}, Section~\ref{sec:noscale}).

  \item \textbf{Continuity of the solution path at $t = 1$.} The map $t \mapsto \psi(t)$ extends continuously to $t = 1$, so that solving the IVP from $t = 0$ to $t = 1$ delivers the dual optimizer of the unregularized problem (Lemma~\ref{lem:limit_psi_at_1}; Lemma~\ref{lem:cont_no_t} for the parameter-independent variant).

  \item \textbf{Computational examples and empirical convergence.} We implement third-order Runge--Kutta solvers (Remark~\ref{rem:RK}) for several example problems in one and two space dimensions, reporting residuals and runtimes in Tables~\ref{tab:prob1_1d}--\ref{tab:prob4_2d}. Empirical first- (and occasionally second-) order convergence in $\Delta t$ is observed.  The ODE-based approach produces the entire solution trajectory $t \mapsto \psi(t)$, enabling visualization of the evolution of Laguerre cells (Figures~\ref{fig:prob3_1d}--\ref{fig:Scal_Par}), and can potentially provide a robust initialization for Newton's method when higher accuracy is desired (Section~\ref{sec:examples}).
\end{enumerate}

\subsection*{Organization}

Section~\ref{sec:prelim} introduces the primal and dual problems, establishes uniform boundedness of dual solutions, and derives the gradient and Hessian formulas for the dual functional $\Phi$. Section~\ref{sec:ode} derives the governing ODE via the implicit function theorem and proves Theorem~\ref{thm:well-posed0} on the $[0,1)$ interval, together with the continuity result at $t = 1$ (Lemma~\ref{lem:limit_psi_at_1}). Section~\ref{sec:noscale} treats the parameter-independent variant, establishing Theorem~\ref{thm:well-posed1}. Section~\ref{sec:examples} presents four computational examples and discusses comparisons with direct Newton solvers. All results confirm that entropic regularization provides a stable and geometrically informative path to the unregularized semi-discrete OT solution, yielding both theoretical guarantees and practical numerical efficiency.

\section{Problem Statement and Preliminary Results}\label{sec:prelim}
Let $Y=\{y_1,\dots,y_N\}\subseteq \R^n$ be a set of $N$ distinct points and let $X\subseteq \R^n$ be a compact  set. 
Let $\mu\in\mathcal{P}_{ac}(X)\cap L^{\infty}(X)$ be an absolutely continuous probability measure on $X$ with an $L^{\infty}$ density; with a slight abuse of notation we also denote its density by $\mu$. The cost function $c:X\times Y\to\R$ is assumed to be $\mathcal{C}^2$  in $x$. We consider the problem
\begin{equation}\label{eq:primal}
    \boxed{\inf_{\substack{\nu\in\mathcal{P}(Y)\\\gamma\in\Gamma(\mu,\nu)}} t\int_{X\times Y} c\;d\gamma + (1-t)\mathrm{Ent}(\gamma|\mu\otimes\sigma) +tF(\nu)\ ,}
\end{equation}
where $t\in[0,1]$, $\sigma=\sum_{i=1}^N\delta_{y_i}$ denotes the counting measure on $Y$, and $F:\R_+^N\to\R\cup\{\infty\}$ is a lower semi-continuous function whose Legendre transform is $\mathcal C^2$ and $m$-strongly convex. We identify the measure $\nu$ with the vector in $\R_+^N$ whose $i$-th component is $\nu_i:=\nu(\{y_i\})$, and $\mathrm{Ent}(\cdot|\mu \otimes \sigma)$ is the Boltzmann-Shannon relative entropy (or Kullback-Leibler divergence) w.r.t.\ the product measure $\mu \otimes \sigma$, defined for general probability measures $p,q$ as
\[
    \mathrm{Ent}(p \,|\, q) = 
    \begin{dcases*}
        \int_{\R^d} \eta \log(\eta)\, \dd q & \text{if $p = \eta q$}\ ,\\
        +\infty & \text{otherwise}\ .
    \end{dcases*}
\]
The fact that $q$ is a probability measure ensures that $\mathrm{Ent}(p \,|\, q) \geq 0$.
 We will also assume that the cost function $c$ is \emph{twisted}, meaning that for each fixed $x \in X$, the mapping 
$$
y_i \mapsto \nabla_xc(x,y_i)
$$
is injective on $Y$.

Introducing Lagrange multipliers $\varphi\in \mathcal{C}_b(X)$ and $\psi\in\R^N$, problem \eqref{eq:primal} can be written as
\begin{align*}
    \inf_{\substack{\nu\in\R^{N}_{ +} \\ \gamma\in\mathcal{M}_+(X\times Y)}} \sup_{\substack{\varphi\in \mathcal{C}_b(X) \\ \psi\in\R^N}}\quad \langle tc, \gamma \rangle + (1-t)\mathrm{Ent}(\gamma|\mu\otimes\sigma) + tF(\nu) + \langle \varphi, \mu\rangle + \langle \psi, \nu\rangle - \langle \varphi\oplus\psi, \gamma\rangle\ .
\end{align*}
The supremum over $\varphi$ enforces that $\gamma$ has first marginal $\mu$, hence $\gamma$ is a probability measure; consequently, the supremum over $\psi$ forces the components $\nu_i$ to be nonnegative and to sum to one. By the Fenchel--Rockafellar theorem we may interchange the supremum and the infimum, yielding
\begin{align*}
    \sup_{\substack{ \varphi\in \mathcal{C}_b(X) \\ \psi\in\R^N}} \inf_{ \substack{\nu\in\R^N_{ +} \\ \gamma\in\mathcal{M}_+(X\times Y)}} \quad \langle tc,\gamma \rangle + (1-t)\mathrm{Ent}(\gamma|\mu\otimes\sigma) + tF(\nu) + \langle \varphi, \mu\rangle + \langle \psi, \nu\rangle - \langle \varphi\oplus\psi, \gamma\rangle\ .
\end{align*}
Identifying the Legendre transforms that arise from the infima over $\nu$ and $\gamma$, and noting that for any fixed $\psi$ the optimal choice of $\varphi$ is
\[
    \varphi(x)=-(1-t)\log\!\Big( \sum_{i=1}^N e^{ \frac{\psi_i-tc(x,y_i)}{1-t}}\Big)\ ,
\]
we obtain the dual problem
\begin{equation} \label{eq:dual}
    \sup_{\psi\in\R^N} \Phi(\psi,t):= \underbrace{-(1-t)\int_X \log\left( \sum_{i=1}^N e^{ \frac{\psi_i-tc(x,y_i)}{1-t} }\right) d\mu(x)}_{=:\mathcal{K}(\psi,t)} -(1-t) -tF^*\left(-\frac{\psi}{t}\right)\ .
\end{equation}

\begin{rem}
    For $t=0$, problem~\eqref{eq:primal} reduces to minimizing the entropy with respect to $\mu\otimes\sigma$. The unique solution is therefore $\nu=\tfrac1N\sigma$ (the uniform probability measure on $Y$) and $\gamma=\mu\otimes\nu$.
\end{rem}

To simplify notation, write $c_i$ for the function $c_i(x)=c(x,y_i)$. Each map $\mathcal{K}(\cdot,t)$ is readily seen to be concave, and elementary computations give the following expressions for its derivatives, which we will use later on:
\begin{equation}\label{eqn: regulairzed grad wrt phi}
    \nabla_\psi\mathcal{K}(\psi,t) = - \int_X \pi^t(\psi)\,d\mu\ ,
\end{equation}
\begin{equation}
    D^2_{\psi,\psi}\mathcal{K}(\psi,t) = \frac{1}{1-t}\int_X [\pi^t(\psi)\pi^t(\psi)^T - \diag(\pi^t(\psi))]\,d\mu\ ,
\end{equation}
\begin{equation}
    [\partial_t\nabla_\psi\mathcal{K}(\psi,t)]_j = \int_X \sum_{k=1,k\neq j}^N \left(\frac{\Delta_{kj}^1(\psi)}{(1-t)^2} \right) \pi_j^t(\psi) \pi_k^t(\psi) \,d\mu\ ,
\end{equation}
where $\Delta^t_{kj}(\psi)= (\psi_k-tc_k)-(\psi_j-tc_j)$ and
\begin{equation}\label{eqn: regularized laguerre integrand}
    [\pi^t(\psi)]_j = \frac{e^{ \frac{\psi_j-tc_j}{1-t}}}{ \sum_{k=1}^N e^{ \frac{\psi_k-tc_k}{1-t}}} = \frac{1}{\sum_{k=1}^N e^{\frac{\Delta_{kj}^t}{1-t}}}\ .
\end{equation}
In particular, $\mathcal{K}(\cdot,t)$ is Lipschitz uniformly in $t$, since each component of its gradient is bounded by one.

\begin{rem}
The calculations above hold for all $t<1$.  The dual of the unregularized problem arising in \eqref{eq:primal} when $t=1$ is
\begin{equation} \label{eq:dual unregularized}
    \sup_{\psi\in\R^N} \Phi(\psi,1):=\underbrace{\sum_{j=1}^N\int_{ \mathrm{Lag}_j(\psi)}[c(x,y_j) -\psi_j]d\mu(x)}_{=:\mathcal{K}(\psi,1)} -F^*\left(-\psi\right)\ ,
\end{equation}
where $\mathrm{Lag}_j(\psi)=\{x\in X : c(x,y_j)-\psi_j \leq c(x,y_k)-\psi_k\ \forall k\in \llbracket 1,N\rrbracket\}$ denotes the Laguerre cell corresponding to $y_j$
 and weights $\psi\in\R^N$.
The components of the gradient of $\mathcal{K}(\psi,1)$ are given by
\begin{equation*}
    \partial_{\psi_j}\mathcal{K}(\psi,1)= -\int_X \mathbbm 1_{\mathrm{Lag}_j(\psi)} \,d\mu = \mu(\mathrm{Lag}_j(\psi))\ .
\end{equation*}
As $t \rightarrow 1^-$, we have that   $\mathcal{K}(\psi,t)\rightarrow \mathcal{K}(\psi,1)$ and $\partial_{\psi_j}\mathcal{K}(\psi,1) \rightarrow    \partial_{\psi_j}\mathcal{K}(\psi,1)$ for each $j$, locally uniformly, by Proposition 53 in \cite{MerigotThibert2021}.
\end{rem}

The derivatives of $\Phi$ which we will need  are
\begin{equation}\label{eqn: gradient of Phi}
    \nabla_\psi\Phi(\psi,t) = \nabla_\psi\mathcal{K}(\psi,t) + \nabla F^*\left(-\frac{\psi}{t}\right)\ ,
\end{equation}
\begin{equation}\label{eqn: Hessian of Phi}
    D^2_{\psi,\psi}\Phi(\psi,t) = D^2_{\psi,\psi}\mathcal{K}(\psi,t) - \frac{1}{t}D^2F^*\left(-\frac{\psi}{t}\right)\ ,
\end{equation}
\begin{equation}\label{eqn: mixed second of Phi}
    \partial_t\nabla_\psi\Phi(\psi,t) = \partial_t\nabla_\psi\mathcal{K}(\psi,t) +\frac{1}{t^2} D^2F^*\left(-\frac{\psi}{t}\right)\psi\ .
\end{equation}

\begin{lem}\label{lem:uniformly_bounded_solutions}
    Let $f : \R^N\to\R$ be differentiable and $m$-strongly convex, and let $g_t:\R^N\to\R$, $t\in(0,1)$, be convex, $\mathcal{C}^2$ differentiable Lipschitz maps with Lipschitz constants uniformly bounded by $L$. For $t\in(0,1)$, let $\psi(t)$ be the unique solution of $\inf_{\psi} g_t(\psi)+tf(\frac{\psi}{t})$. Then $\frac{\psi(t)}{t}$, $t\in(0,1)$, is uniformly bounded.
\end{lem}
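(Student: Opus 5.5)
Substitute $z=\psi/t$, so that $\psi(t)/t=z(t)$ is the unique minimizer of
\[
h_t(z):=g_t(tz)+tf(z),
\]
and use the first-order optimality condition together with strong monotonicity of $\nabla f$. Since $g_t$ is convex, $\mathcal C^2$ and $L$-Lipschitz, we have $|\nabla g_t|\le L$ everywhere. Since $f$ is differentiable and $m$-strongly convex, it has a unique minimizer $z_0$, with $\nabla f(z_0)=0$. Both $z_0$ and $m$ are independent of $t$.

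The optimality condition for $z(t)$ reads
\[
t\,\nabla g_t\bigl(t z(t)\bigr)+t\,\nabla f\bigl(z(t)\bigr)=0,
\]
and dividing by $t>0$ gives $\nabla f(z(t))=-\nabla g_t(tz(t))$. This division is the key point: the $t$ factor in front of $f$ exactly compensates the chain-rule factor $t$ from $g_t(tz)$, so the balance does not degenerate as $t\to0$. Strong convexity gives
\[
\langle \nabla f(z)-\nabla f(z_0),\,z-z_0\rangle\ge m|z-z_0|^2 .
\]
With $z=z(t)$ this yields
\[
m|z(t)-z_0|^2\le \langle -\nabla g_t(tz(t)),\,z(t)-z_0\rangle\le L|z(t)-z_0|,
\]
hence $|z(t)-z_0|\le L/m$. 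Therefore $|\psi(t)/t|\le |z_0|+L/m$ for all $t\in(0,1)$.

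The only point needing care is justifying the optimality condition, i.e. that the minimizer exists and the objective is differentiable there. Existence and uniqueness are assumed in the statement, and in any case $h_t$ is $tm$-strongly convex and differentiable, so the minimizer is characterized by $\nabla h_t=0$.

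If one prefers to avoid differentiability of $f$, a zeroth-order version works as well. Compare $h_t(z(t))\le h_t(z_0)$, use the Lipschitz bound $g_t(tz(t))-g_t(tz_0)\ge -Lt|z(t)-z_0|$, and use the quadratic growth $f(z)-f(z_0)\ge \frac m2|z-z_0|^2$. After cancelling $t$ this gives $|z(t)-z_0|\le 2L/m$. I expect no real obstacle here; the argument is essentially a one-line monotonicity estimate.
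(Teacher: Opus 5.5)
Your proof is correct and follows the paper's route: the first-order condition $\nabla g_t(\psi(t))+\nabla f(\psi(t)/t)=0$ gives $|\nabla f(\psi(t)/t)|\le L$, and strong convexity makes $\nabla f$ coercive, which bounds $\psi(t)/t$. Your strong-monotonicity step $\langle \nabla f(z)-\nabla f(z_0),z-z_0\rangle\ge m|z-z_0|^2$ is slightly cleaner than the paper's Hessian-integral argument, because it uses only the differentiability of $f$ that the statement assumes, whereas the paper's argument invokes $D^2 f$.
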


\begin{proof}
    Since $f$ is strongly convex, the map $x\mapsto |\nabla f(x)|$ is coercive. Indeed, for any $x\neq0$ we have
    \begin{align*}
    \langle x,\nabla f(x)\rangle = \langle x,\nabla f(0)\rangle + \int_0^1 \underbrace{ \langle x,D^2f(sx)x\rangle}_{\geq m|x|^2}\,ds\ ,
    \end{align*}
    so, dividing by $|x|$ and using the Cauchy--Schwarz inequality, we obtain
    \begin{align*}
    |\nabla f(x)| \geq m|x| - |\nabla f(0)|\ .
    \end{align*}
    The first-order condition reads $\nabla g_t(\psi(t)) + \nabla f\big(\frac{\psi(t)}{t}\big)=0$, hence $|\nabla f(\frac{\psi(t)}{t})|\leq L$. Coercivity of $|\nabla f|$ therefore implies that $\frac{\psi(t)}{t}$ is uniformly bounded for $t\in(0,1)$.
\end{proof}

Thanks to the strong convexity assumption on $F^*$, Lemma~\ref{lem:uniformly_bounded_solutions} directly yields the following result.
\begin{prop}\label{prop:bounded_nu}
    For $t\in(0,1)$, problem~\eqref{eq:dual} has a unique solution $\psi(t)\in\R^N$. Moreover, the family $\{\frac{\psi(t)}{t}: t\in(0,1)\}$ is uniformly bounded; consequently $\psi(t) \xrightarrow[t\to0+]{}0$.
\end{prop}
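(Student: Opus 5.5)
We recast the maximization in \eqref{eq:dual} as a minimization so that Lemma~\ref{lem:uniformly_bounded_solutions} applies directly. Fix $t\in(0,1)$. Maximizing $\Phi(\cdot,t)$ is equivalent to minimizing
\[
  -\Phi(\psi,t) - (1-t) = g_t(\psi) + t f\!\left(\tfrac{\psi}{t}\right),\qquad g_t:=-\mathcal{K}(\cdot,t),\quad f(x):=F^*(-x).
\]
We then verify the hypotheses of the lemma in turn.

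First, $f$ is $\mathcal{C}^2$ and $m$-strongly convex, because $F^*$ has these properties and they are preserved under the linear isometry $x\mapsto -x$. Second, $g_t$ is convex and $\mathcal{C}^2$, since $\mathcal{K}(\cdot,t)$ is concave and its Hessian is the integral expression given above; differentiation under the integral is justified because $\mu$ is a probability measure and $X$ is compact. Third, by \eqref{eqn: regulairzed grad wrt phi}, $\nabla g_t(\psi)=\int_X \pi^t(\psi)\,d\mu$. Since $\pi^t(\psi)$ is a probability vector, we get $|\nabla g_t|\le \sqrt N$ (even $\le 1$ in $\ell^1$). Hence the Lipschitz constants are bounded uniformly in $t$ by $L=\sqrt N$.

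Existence and uniqueness come next. The function $\psi\mapsto t f(\psi/t)$ is $\tfrac{m}{t}$-strongly convex, and $g_t$ is convex, so the objective is strongly convex and continuous. It is therefore coercive and has exactly one minimizer $\psi(t)$. This minimizer is the unique maximizer of \eqref{eq:dual}. It is also characterized by the first-order condition $\nabla g_t(\psi(t))+\nabla f(\psi(t)/t)=0$, which is precisely the condition used in the proof of Lemma~\ref{lem:uniformly_bounded_solutions}.

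Applying Lemma~\ref{lem:uniformly_bounded_solutions} then gives a constant $C$ with $|\psi(t)/t|\le C$ for all $t\in(0,1)$. Explicitly, $C=(L+|\nabla f(0)|)/m$. It follows that $|\psi(t)|\le Ct\to0$ as $t\to0^+$.

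No step here is a genuine obstacle. The one point needing care is the sign bookkeeping: the strongly convex term $f$ must be $F^*(-\,\cdot)$, and the $(1-t)$ constant drops out of the minimization. A secondary point is that $F^*$ is finite and $\mathcal{C}^2$ on all of $\R^N$, which is part of the standing assumption and is needed for the objective to be finite everywhere.
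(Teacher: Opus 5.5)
Your proposal is correct and follows the paper's route: the paper states that the proposition follows directly from Lemma~\ref{lem:uniformly_bounded_solutions}, applied with $g_t=-\mathcal{K}(\cdot,t)$ and $f=F^*(-\,\cdot)$. You additionally spell out the details the paper leaves implicit, namely the uniform Lipschitz bound from $\nabla g_t=\int_X\pi^t(\psi)\,d\mu$ being a probability vector (in fact of Euclidean norm at most $1$) and existence/uniqueness from the $\tfrac{m}{t}$-strong convexity of the objective.
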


\section{The Governing ODE}\label{sec:ode}
Since the dual functional $\Phi(\cdot,t)$ is concave it easily follows that the minimizers $\psi(t)$ for any $t$ are characterized by the first order  condition $\nabla_\psi\Phi(\psi(t),t)=0$ which can be differentiated with respect to $t$ via the implicit function theorem, yielding an ODE for the trajectory of solutions. Indeed, the map $(\psi,t)\mapsto\nabla_\psi\Phi(\psi,t)$ is smooth on $(0,1)\times\R^N$, and its Jacobian in $\psi$ is uniformly negative definite because $F^*$ is strongly convex; hence the hypotheses of the implicit function theorem are satisfied. It follows that the trajectory $t \mapsto \psi(t)$ is of class $\mathcal{C}^1$ on $(0,1)$ and satisfies the following ODE on this interval:
\begin{equation} \label{eq:ODE}
    D^2_{\psi,\psi}\Phi(\psi(t),t) \psi'(t) + \partial_t\nabla_\psi\Phi(\psi(t),t) = 0\ .
\end{equation}
Substituting the expressions for the derivatives of $\Phi$, the ODE can be written as
\begin{equation*}
    \left[D^2_{\psi,\psi} \mathcal{K}(\psi(t),t) -\frac{1}{t}D^2F^*\left(-\frac{\psi(t)}{t} \right)\right] \psi'(t) + \partial_t\nabla_\psi\mathcal{K}(\psi(t),t) + \frac{1}{t^2} D^2F^*\left(-\frac{\psi(t)}{t}\right)\psi(t)=0\ .
\end{equation*}

\begin{lem}
    The trajectory $t\mapsto\psi(t)$ is $\mathcal{C}^1$ on $[0,1)$.
\end{lem}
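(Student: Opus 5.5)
We already know $\psi$ is $\mathcal{C}^1$ on $(0,1)$ by the implicit function theorem, and $\psi(0)=0$ is the natural extension by Proposition~\ref{prop:bounded_nu}. So it suffices to show that $\psi$ is differentiable from the right at $t=0$ and that $\psi'(t)$ converges as $t\to0^+$ to that derivative. The natural device is the rescaling $z(t):=\psi(t)/t$. Then $\psi(t)=tz(t)$ and $\psi'(t)=z(t)+tz'(t)$, so it is enough to prove two facts: $z(t)$ has a limit $z_0$ as $t\to0^+$, and $tz'(t)\to0$. Then $\psi'(t)\to z_0$, and $(\psi(t)-\psi(0))/t=z(t)\to z_0$, which gives $\mathcal{C}^1$ on $[0,1)$ with $\psi'(0)=z_0$.

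\textbf{Identifying $z_0$.} The first-order condition $\nabla_\psi\Phi(\psi(t),t)=0$ reads
\[
\int_X\pi^t(tz(t))\,d\mu=\nabla F^*(-z(t)).
\]
By Proposition~\ref{prop:bounded_nu}, $z(t)$ stays bounded. Moreover $\frac{\psi_j-tc_j}{1-t}=t\,\frac{z_j-c_j}{1-t}\to0$ uniformly on $X$, since $c$ is continuous on the compact set $X$. Hence $\pi^t(tz(t))\to\frac1N\mathbf 1$ uniformly, and every cluster point $\bar z$ of $z(t)$ satisfies $\nabla F^*(-\bar z)=\frac1N\mathbf 1$. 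Strong convexity of $F^*$ makes $\nabla F^*$ injective, so the cluster point is unique. Therefore $z(t)\to z_0$, where $z_0$ is defined by $\nabla F^*(-z_0)=\frac1N\mathbf 1$.

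\textbf{Showing $tz'(t)\to0$.} This is the main obstacle, since the ODE \eqref{eq:ODE} is singular at $t=0$. I rewrite it in terms of $z$. Set $G(z,t):=\nabla_\psi\mathcal{K}(tz,t)+\nabla F^*(-z)$, which is smooth in $(z,t)$ on $\R^N\times(-1,1)$ because $tz$ enters $\pi^t$ smoothly, including at $t=0$. Then $G(z(t),t)=0$. Its Jacobian in $z$ is
\[
tD^2_{\psi\psi}\mathcal{K}(tz,t)-D^2F^*(-z).
\]
At $t=0$ this equals $-D^2F^*(-z_0)\preceq -mI$, which is invertible. Note that $\frac{t}{1-t}\cdot$ (a bounded negative semidefinite matrix) only improves negativity, so the Jacobian is uniformly invertible for $t\in[0,1)$.

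The implicit function theorem at $(z_0,0)$ then gives a $\mathcal{C}^1$ branch $\tilde z(t)$ near $t=0$ with $\tilde z(0)=z_0$. Uniqueness of the dual maximizer for $t>0$, combined with $z(t)\to z_0$, identifies $\tilde z=z$ on a small interval $(0,\delta)$. Hence $z$ extends to a $\mathcal{C}^1$ function on $[0,\delta)$, so $z'$ is bounded near $0$, $tz'(t)\to0$, and $\psi=tz$ is $\mathcal{C}^1$ on $[0,1)$.

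The delicate points to check are the smoothness of $(z,t)\mapsto\nabla_\psi\mathcal{K}(tz,t)$ up to $t=0$, and the identification of the implicit branch with the true minimizer. For the first, differentiating under the integral is fine because all integrands are bounded uniformly in $x$. For the second, use local uniqueness in the implicit function theorem together with the convergence $z(t)\to z_0$.
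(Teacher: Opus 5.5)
Your proof is correct, but it takes a different route from the paper. The paper stays in the original variable. It multiplies the singular ODE \eqref{eq:ODE} by $t$ and rewrites it as \eqref{eqn: rearranged ODE}, that is, $A(t)\bigl(\psi'(t)-\psi(t)/t\bigr)=-t\,b(t)$. Here $A(t)=tD^2_{\psi,\psi}\mathcal K-D^2F^*(-\psi/t)\preceq -mI$, and $b(t)$ is bounded by Proposition~\ref{prop:bounded_nu}. This a priori estimate gives $\psi'(t)-\psi(t)/t=O(t)\to0$. Only afterwards does the paper identify $\lim_{t\to0}\psi(t)/t=-\nabla F(\frac1N\mathbf 1)$ from the first-order condition; this is your $z_0$, since $\nabla F=(\nabla F^*)^{-1}$.

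You reverse the order. You first identify $z_0$ by a cluster-point and injectivity argument. You then observe that in the variables $(z,t)$ the optimality condition $G(z,t)=\nabla_\psi\Phi(tz,t)=0$ is $\mathcal C^1$ across $t=0$, because the exponents are $t(z_j-c_j)/(1-t)$. So the implicit function theorem applies at $(z_0,0)$ itself.

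The ingredients are the same in both proofs: boundedness of $z$, $\pi^t\to\frac1N\mathbf 1$, and uniform negative definiteness of $tD^2_{\psi,\psi}\mathcal K-D^2F^*$. Your version never manipulates the singular ODE, and it makes explicit that $z=\psi/t$ is $\mathcal C^1$ up to $t=0$, which fits naturally with the Cauchy problem \eqref{eqn: cauchy for z}. The paper's estimate instead directly produces the rearranged equation that is reused in Theorem~\ref{thm:well-posed0} to derive the ODE for $z$.

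Two small remarks. Since $F^*$ is only $\mathcal C^2$, $G$ is $\mathcal C^1$ rather than smooth, which is all the implicit function theorem needs. Also, the identification $\tilde z=z$ does not actually need $z(t)\to z_0$: for $t>0$ the strictly concave $\Phi(\cdot,t)$ has $\psi(t)$ as its only critical point, so $G(\tilde z(t),t)=0$ already forces $t\tilde z(t)=\psi(t)$.
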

\begin{proof}
    Multiplying the ODE by $t$ and rearranging terms yields
    \begin{align}
        \left[tD^2_{\psi,\psi}\mathcal{K}(\psi(t),t)- D^2F^*\left(-\frac{\psi(t)}{t}\right)\right] &\left(\psi'(t)-\frac{\psi(t)}{t}\right) \nonumber\\[4pt]
        &\quad +\; t\left[D^2_{\psi,\psi}\mathcal{K}(\psi(t),t)\frac{\psi(t)}{t}+\partial_t\nabla_\psi\mathcal{K}(\psi(t),t)\right]= 0\ .\label{eqn: rearranged ODE}
    \end{align}
    By Lemma~\ref{lem:uniformly_bounded_solutions} and the smoothness of $\mathcal{K}$, the expression in the second brackets is bounded, so the second term tends to zero as $t\to 0^+$. Moreover, the concavity of $\mathcal{K}(\cdot,t)$ together with the strong convexity of $F^*$ implies that the matrix in the first brackets is uniformly negative definite for $t$ near $0$. Combining these observations shows that $\psi'(t)-\frac{\psi(t)}{t}$ converges to $0$ as $t\to 0$.
    
    On the other hand, the first-order condition for $t\in(0,1)$ can be written
    \[
        \frac{\psi(t)}{t}=-\nabla F(\nabla_\psi \mathcal{K}(\psi(t),t))\ .
    \]
    Since $\lim_{t\to0}\psi(t)=0$, passing to the limit and using \eqref{eqn: regulairzed grad wrt phi} and \eqref{eqn: regularized laguerre integrand} yields
  \begin{equation}\label{eqn: limit of z}
        \lim_{t\to0}\frac{\psi(t)}{t}= -\nabla F\!\left(\tfrac{1}{N}\mathbf 1\right)\ .
 \end{equation}
    The previous convergence result therefore implies that $t\mapsto \psi(t)$ extends to a $\mathcal{C}^1$ function on $[0,1)$ by setting $\psi(0)=0$, and moreover
\begin{equation}\label{eqn: initial derivative}
    \psi'(0)=-\nabla F\!\left(\tfrac{1}{N}\mathbf 1\right)\ .\qedhere
\end{equation}
\end{proof}

\begin{lem} \label{lem:limit_psi_at_1}
    The map $t\mapsto\psi(t)$ is continuous at $t=1$.
\end{lem}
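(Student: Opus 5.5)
Here $\psi(1)$ denotes the unique maximizer of the unregularized dual \eqref{eq:dual unregularized}. The plan is a standard stability argument for maximizers of concave functions: compactness of the trajectory, identification of every cluster point, and uniqueness of the limit.

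\textbf{Step 1: compactness.} By Proposition~\ref{prop:bounded_nu}, the family $\psi(t)/t$ is uniformly bounded on $(0,1)$. Since $t\le 1$, the family $\psi(t)$ is also bounded. Hence every sequence $t_k\to1^-$ has a subsequence along which $\psi(t_k)\to\bar\psi$ for some $\bar\psi\in\R^N$.

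\textbf{Step 2: every cluster point is a critical point at $t=1$.} We pass to the limit in the first-order condition
\[
\nabla_\psi\mathcal{K}(\psi(t_k),t_k)+\nabla F^*\!\left(-\tfrac{\psi(t_k)}{t_k}\right)=0 .
\]
\begin{itemize}
\item For the second term, $-\psi(t_k)/t_k\to-\bar\psi$, and $\nabla F^*$ is continuous because $F^*$ is $\mathcal C^2$.
\item For the first term, the preceding remark (via Proposition 53 of \cite{MerigotThibert2021}) gives $\nabla_\psi\mathcal{K}(\cdot,t)\to\nabla_\psi\mathcal{K}(\cdot,1)$ locally uniformly. The map $\nabla_\psi\mathcal{K}(\cdot,1)$, whose components are $\psi\mapsto \mu(\mathrm{Lag}_j(\psi))$ up to sign, is continuous. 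This uses that $\mu$ is absolutely continuous and $c$ is twisted, so the boundaries between Laguerre cells are $\mu$-negligible.
\end{itemize}
Combining local uniform convergence with continuity of the limit gives $\nabla_\psi\mathcal{K}(\psi(t_k),t_k)\to\nabla_\psi\mathcal{K}(\bar\psi,1)$. Therefore $\nabla_\psi\Phi(\bar\psi,1)=0$.

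\textbf{Step 3: uniqueness.} The function $\mathcal{K}(\cdot,1)$ is concave, being a pointwise limit of the concave functions $\mathcal{K}(\cdot,t)$. The function $-F^*(-\cdot)$ is $m$-strongly concave. So $\Phi(\cdot,1)$ is strongly concave and $\mathcal C^1$, and it has exactly one critical point, namely its unique maximizer $\psi(1)$. Thus $\bar\psi=\psi(1)$. Since every sequence $t_k\to1^-$ has a subsequence converging to this same point, $\psi(t)\to\psi(1)$, which is the continuity claimed.

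\textbf{Main obstacle.} The delicate point is Step 2: justifying the joint limit $\nabla_\psi\mathcal{K}(\psi_k,t_k)\to\nabla_\psi\mathcal{K}(\bar\psi,1)$ with both arguments moving. Pointwise convergence alone would not suffice. If local uniform convergence were not available, I would argue directly instead. For $x$ outside the $\mu$-null union of cell boundaries of $\bar\psi$, there is a unique index $j$ minimizing $c_j(x)-\bar\psi_j$, with a positive gap to the other indices. For $k$ large this gap persists for $\psi_k$, so $\pi^{t_k}(\psi_k)(x)\to e_j$, since every $\Delta_{kj}/(1-t_k)$ tends to $-\infty$. Dominated convergence, using $|\pi|\le1$, then concludes.

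An alternative route avoids gradients entirely. One uses $\Phi(\cdot,t)\to\Phi(\cdot,1)$ locally uniformly, which follows from $|\mathcal{K}(\psi,t)-\mathcal{K}(\psi,1)|\lesssim (1-t)\log N+(1-t)\sup|c|$ together with continuity of $F^*$. Maximizers of these uniformly strongly concave functions then converge to the unique maximizer of the limit.
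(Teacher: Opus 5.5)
Your proof is correct and follows the paper's argument. You extract a convergent subsequence from the uniform bound, pass to the limit in the first-order condition using the locally uniform convergence of $\nabla_\psi\mathcal{K}(\cdot,t)$ (Proposition 53 of M\'erigot--Thibert), and conclude by uniqueness of the critical point at $t=1$. You also spell out two steps the paper leaves implicit, namely the joint limit in $(\psi,t)$ and the strong concavity of $\Phi(\cdot,1)$ that gives uniqueness, and your limiting equation correctly has $\nabla F^*(-\bar\psi)$ where the paper's display has a sign slip.
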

\begin{proof}
    Let $(t_n)$ be a strictly increasing sequence with $t_n\to 1^-$. By Lemma~\ref{lem:uniformly_bounded_solutions}, the sequence $(\psi(t_n))$ has a convergent subsequence; denote its limit by $\widetilde{\psi}\in\R^N$. The function $\nabla_\psi\mathcal{K}(\cdot,t)$ converges to $\nabla_\psi\mathcal{K}(\cdot,1)$ locally uniformly as $t\to 1$ (see Proposition 53 in \cite{MerigotThibert2021})
    so taking the limit $n\to\infty$ in the first-order condition gives
    \[
        \nabla_\psi\mathcal{K}(\widetilde{\psi},1) + \nabla F^*(\widetilde{\psi})=0\ ,
    \]
    which is precisely the first-order condition at $t=1$. Hence $\widetilde{\psi}=\psi(1)$, and continuity at $t=1$ follows.
\end{proof}

\begin{thm}\label{thm:well-posed0}
    Let $\psi(t)$ be a solution to \eqref{eq:dual} for $t \in [0,1)$ and set $z(t)=\frac{\psi(t)}{t}$. Then the trajectory $t \mapsto z(t)$ is smooth on $(0,1)$ and, for each $t \in (0,1)$, $z(t)$ is the unique solution of the Cauchy problem
    \begin{equation}\label{eqn: cauchy for z}
    \boxed{
    \begin{cases}
        & \left[tD^2_{\psi,\psi}\mathcal{K}(tz(t),t)- D^2F^*\left(-z(t)\right)\right] z'(t) +\left[D^2_{\psi,\psi}\mathcal{K}(tz(t),t)z(t)+\partial_t\nabla_\psi\mathcal{K}(tz(t),t)\right]= 0\ , \\
        &z(0) = -\nabla F(\frac{1}{N}\mathbf 1)\ .
    \end{cases}
    }
    \end{equation}
\end{thm}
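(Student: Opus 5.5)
The plan is to prove two things: that $z$ satisfies \eqref{eqn: cauchy for z}, and that this Cauchy problem has at most one solution, so $z$ is the solution.

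\textbf{$z$ solves the equation.} On $(0,1)$ the map $\psi(t)$ is smooth by the implicit function theorem, as explained at the start of Section~\ref{sec:ode}. The only inputs are that $\mathcal K$ is smooth for $t<1$, that $F^*$ is $\mathcal C^2$, and that the Jacobian in $\psi$ is invertible. Hence $z=\psi/t$ is $\mathcal C^1$ on $(0,1)$, with the regularity that $D^2F^*$ permits. Substituting $\psi=tz$ and $\psi'-\psi/t=tz'$ into the rearranged ODE \eqref{eqn: rearranged ODE} and dividing by $t>0$ gives exactly the equation in \eqref{eqn: cauchy for z}. For the initial condition, \eqref{eqn: limit of z} gives $z(t)\to-\nabla F(\frac1N\mathbf 1)$. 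Setting $z(0)$ equal to this limit makes $z$ continuous on $[0,1)$. I will also record that $z$ is $\mathcal C^1$ up to $0$. To see this, write the equation as $z'=G(z,t)$ with
\[
G(z,t):=-\big[tD^2_{\psi,\psi}\mathcal K(tz,t)-D^2F^*(-z)\big]^{-1}\big[D^2_{\psi,\psi}\mathcal K(tz,t)z+\partial_t\nabla_\psi\mathcal K(tz,t)\big].
\]

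\textbf{Uniqueness.} The key observation is that the matrix $A(z,t)=tD^2_{\psi,\psi}\mathcal K(tz,t)-D^2F^*(-z)$ satisfies $A\preceq -m I$ for all $(z,t)\in\R^N\times[0,1)$. This holds because $\mathcal K(\cdot,t)$ is concave and $F^*$ is $m$-strongly convex. So $A$ is invertible with $\|A^{-1}\|\le 1/m$, and the vector field $G$ is well defined, including at $t=0$. At $t=0$ the terms involving $\mathcal K$ must be read as their limits. We have $\pi^0=\frac1N\mathbf 1$, so $D^2\mathcal K(0,0)$ and $\partial_t\nabla\mathcal K(0,0)$ are explicit and finite. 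The formulas for the derivatives of $\mathcal K$ extend smoothly in $(\psi,t)$ to $t\in[0,1)$, because $\frac{\psi_j-tc_j}{1-t}$ is smooth there. I will then check that $G$ is continuous on $\R^N\times[0,1)$ and locally Lipschitz in $z$, locally uniformly in $t$. The ingredients are:
\begin{itemize}
\item $D^2F^*$ is $\mathcal C^1$ (if only $\mathcal C^0$ is available, I would assume local Lipschitz continuity of $D^2F^*$, which is implicit in the paper's use of smoothness);
\item $\mathcal K$ is smooth, with derivatives bounded on compact subsets of $\R^N\times[0,1)$ since $X$ is compact and $c_i$ is continuous;
\item inversion is smooth on matrices $\preceq -mI$.
\end{itemize}
Cauchy--Lipschitz then gives local uniqueness from $t=0$. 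For global uniqueness on $[0,1)$, suppose two solutions $z_1,z_2$ exist. On any $[0,T]$ with $T<1$, each solution stays bounded: $\|A^{-1}\|\le 1/m$ and the bracket grows at most linearly in $|z|$, because $D^2\mathcal K$ and $\partial_t\nabla\mathcal K$ are bounded for $t\le T$, with entries of order $(1-T)^{-2}\times$ (a factor linear in $\psi$ and $c$). Grönwall then gives a priori bounds. Within those bounds $G$ is Lipschitz, so a second Grönwall estimate gives $z_1=z_2$ on $[0,T]$. Since $T<1$ is arbitrary, the solutions agree on $[0,1)$. This a priori bound also shows the maximal solution exists on all of $[0,1)$.

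\textbf{Main obstacle.} The delicate point is the behaviour at $t=0$, where the original ODE \eqref{eq:ODE} is singular. I must justify that $G$ extends continuously, and in fact Lipschitz in $z$, up to $t=0$. I must also justify that the $z(t)=\psi(t)/t$ coming from the dual problem is a genuine solution on $[0,1)$, including differentiability at $0$ with $z'(0)=G(z(0),0)$. For the second point I would argue as follows. On $(0,1)$ we have $z'=G(z,t)$, and $G(z(t),t)\to G(z(0),0)$ by continuity and \eqref{eqn: limit of z}. By the mean value theorem $z$ is then $\mathcal C^1$ on $[0,1)$, and uniqueness applies. The other possible obstacle is the linear-growth bound for $\partial_t\nabla_\psi\mathcal K(tz,t)$. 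I would get it from $|\Delta^1_{kj}(tz)|\le |t z_k-tz_j|+|c_k-c_j|$ together with $\pi_j\pi_k\le1$.
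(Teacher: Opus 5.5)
Your proposal is correct and follows essentially the same route as the paper. Both obtain the $z$-equation by dividing \eqref{eqn: rearranged ODE} by $t$, take $z(0)$ from \eqref{eqn: limit of z}, and apply Cauchy--Lipschitz to $z'=G(z,t)$ on $[0,\bar t]$ for arbitrary $\bar t<1$, using concavity of $\mathcal K(\cdot,t)$ and $m$-strong convexity of $F^*$ to invert the matrix in front of $z'$. Your extra details make explicit points the paper leaves implicit: Gr\"onwall a priori bounds for arbitrary solutions in place of the bound from Proposition~\ref{prop:bounded_nu}, the mean-value argument for differentiability of $z$ at $t=0$, and the remark that $D^2F^*$ must be locally Lipschitz.
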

\begin{proof}
The limit of $z(t)$ as $t$ tends to $0$ follows from \eqref{eqn: limit of z}.  Dividing \eqref{eqn: rearranged ODE} by $t$ and changing variables from $\psi$ to $z$ then implies that $z$ solves \eqref{eqn: cauchy for z}.  

It remains to show uniqueness; for this, we will apply the Cauchy-Lipschitz theorem to \eqref{eqn: cauchy for z}.  Note that since $z(t)$ has a limit as $t \rightarrow 0$ and  is uniformly bounded in $t$ by Proposition \ref{prop:bounded_nu}, we can work on the compact domain $[0,\bar t] \times \bar B(0,R)$, for some $\bar t <1$ and $R>0$. Since $\mathcal{K}$ and $F^*$ are smooth on this domain, all of their derivatives are bounded there, and by the concavity of $\mathcal{K}(\cdot, t)$ and the strong convexity of $F^*$, we can rewrite the ODE in \eqref{eqn: cauchy for z} as 
\small{
\begin{gather*}
    z'(t) = G(z(t),t) := -\left[tD^2_{\psi,\psi}\mathcal{K}(tz(t),t)- D^2F^*\left(-z(t)\right)\right] ^{-1}\left( D^2_{\psi,\psi}\mathcal{K}(tz(t),t)z(t)+\partial_t\nabla_\psi\mathcal{K}(tz(t),t)\right)\ .
\end{gather*}}
The right hand side is Lipschitz on $[0,\bar t] \times \bar B(0,R)$, and so the Cauchy-Lipschitz theorem then guarantees existence and uniqueness on $[0,\bar t]$. Since $\bar t < 1$ was arbitrary, the result holds on $[0,1)$.
\end{proof}

The preceding result implies that we can recover the solution $\psi(t)=tz(t)$ to the dual problem \eqref{eq:dual} for any $t <1$  by solving the Cauchy problem \eqref{eqn: cauchy for z}.  Lemma \ref{lem:limit_psi_at_1} then implies that we can obtain an arbitrarily good approximation of the solution to the unregularized problem by taking $t$ sufficiently close to $1$.

Note that we prove well-posedness of the Cauchy problem \eqref{eqn: cauchy for z} because the original ODE \eqref{eq:ODE} is not well defined at $t=0$ (as the derivatives in \eqref{eqn: Hessian of Phi} and \eqref{eqn: mixed second of Phi} are not well defined there).  When solving numerically, as an alternative to solving \eqref{eqn: cauchy for z}, we can solve \eqref{eq:ODE} together with the initial condition $\psi(0)=0$, but using directly the derivative $\psi'(0) = -\nabla F(\frac{1}{N}\mathbf 1)$ from \eqref{eqn: initial derivative} in the first time step of the numerical scheme (rather than using the ODE to compute the first value of $\psi'(0)$).  We do this in our computations in Section \ref{sec:examples} below.

\section{Parameter-Independent Functional}\label{sec:noscale}

To motivate this section, let us briefly recap what we have done so far.  The ultimate goal of the ODE method presented above is to characterize and compute solutions to the unregularized problem \eqref{eqn: general variational problems}.

The method to do this developed above is to solve the regularized version \eqref{eq:primal}, or equivalently its dual \eqref{eq:dual} via the ODE \eqref{eqn: cauchy for z} and take the limit as $t \rightarrow 1$.  Since when $t=1$ \eqref{eq:primal} coincides with \eqref{eqn: general variational problems}, this yields the solution to the unregularized problem.

In this section, we present an alternate, but closely related approach; namely, we regularize the problem but do not multiply the functional $F$ by the regularization parameter $t$.  Consider the problem

\begin{equation}\label{P1:primal}
    \inf_{\substack{\nu\in\mathcal{P}(Y)\\\gamma\in\Gamma(\mu,\nu)}} t\int_{X\times Y} c\;d\gamma +(1-t)\mathrm{Ent}(\gamma|\mu\otimes\sigma)+F(\nu)\ , 
\end{equation}
and as above, note that at $t=1$ \eqref{P1:primal} coincides with \eqref{eqn: general variational problems}.  We present here an alternate ODE method to characterize and compute solutions to \eqref{eqn: general variational problems} using this variant; the calculations are very similar, and in several cases simpler than, those for \eqref{eq:primal}. 
For this problem one can derive the dual formulation and its gradient:
\begin{align}\label{P1:dual}
    \sup_{\psi\in\R^N} \Psi(\psi,t):= \mathcal{K}(\psi,t)-(1-t) -F^*\left(-\psi\right)\ , \ 
    \nabla\Psi(\psi,t) = \nabla\mathcal{K}(\psi,t) - \nabla F^*\left(-\psi\right)\ .
\end{align}

\begin{lem}\label{lem:bound_sol}
    Let $f : \R^N \to \R$ be differentiable and $m$-strongly convex, and let $g_t : \R^N \to \R$, $t \in (0,1]$, be convex, $\mathcal{C}^2$ differentiable Lipschitz maps with Lipschitz constants uniformly bounded by $L$. For $t \in [0,1]$, let $\psi(t)$ denote the unique solution of $\inf_{\psi} \, g_t(\psi) + f(\psi)$. Then $\psi(t)$, $t \in [0,1]$, is uniformly bounded.
\end{lem}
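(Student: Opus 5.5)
The plan is to copy the argument of Lemma~\ref{lem:uniformly_bounded_solutions}, now without the rescaling by $t$, so that the first-order condition bounds $\nabla f$ directly at $\psi(t)$ rather than at $\psi(t)/t$.

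The first step is coercivity of $|\nabla f|$. For $x\neq 0$ we write
\[
\langle x,\nabla f(x)-\nabla f(0)\rangle \geq m|x|^2 ,
\]
which follows from $m$-strong convexity. Since $f$ is only assumed differentiable, I would use strong monotonicity of the gradient here instead of integrating the Hessian. Cauchy--Schwarz then gives $|\nabla f(x)|\geq m|x|-|\nabla f(0)|$.

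Next, for each $t$ the objective $g_t+f$ is strongly convex, so the minimizer $\psi(t)$ exists and is unique. It is characterized by the first-order condition $\nabla g_t(\psi(t))+\nabla f(\psi(t))=0$. Because each $g_t$ is Lipschitz with constant at most $L$, we have $|\nabla g_t|\leq L$ everywhere, and therefore $|\nabla f(\psi(t))|\leq L$.

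Combining the two steps gives
\[
m|\psi(t)|\leq L+|\nabla f(0)| ,
\]
so $|\psi(t)|\leq (L+|\nabla f(0)|)/m$ uniformly in $t$.

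The only point needing care is the endpoint $t=0$, since the hypothesis on $g_t$ is stated for $t\in(0,1]$ while the conclusion is claimed on $[0,1]$. In the application $g_t=-\mathcal{K}(\cdot,t)+(1-t)$, and $g_0$ is the convex, $1$-Lipschitz log-sum-exp function, so I would simply include $t=0$ in the family with the same constant $L$. The case $t=1$, where $g_1=-\mathcal{K}(\cdot,1)$, is covered because it is still convex and Lipschitz with gradient given by the cell masses bounded by one; only the Lipschitz bound is used, not the $\mathcal{C}^2$ smoothness.
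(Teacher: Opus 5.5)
Your proposal is correct and follows essentially the same route as the paper, whose proof simply reruns the argument of Lemma~\ref{lem:uniformly_bounded_solutions}: coercivity of $|\nabla f|$, the first-order condition, and the uniform bound $|\nabla g_t|\le L$ together give $|\psi(t)|\le (L+|\nabla f(0)|)/m$. Your two changes refine that argument rather than replace it. Using strong monotonicity of $\nabla f$ instead of integrating $D^2 f$ matches the stated differentiability assumption, and you explicitly handle the endpoint $t=0$, which the statement's hypotheses omit.
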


\begin{proof}
    The proof follows directly by applying the same arguments as in Lemma \ref{lem:uniformly_bounded_solutions}.
\end{proof}

\begin{lem}\label{lem:cont_no_t}
    The mapping $t\mapsto\psi(t)$ is continuous at $t=1$, where $\psi(t)$ is the unique maximizer of the concave function $\psi \mapsto \Psi(\psi,t)$ in \eqref{P1:dual} at time $t$.
\end{lem}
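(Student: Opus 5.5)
The proof will follow the same compactness-plus-uniqueness argument as Lemma~\ref{lem:limit_psi_at_1}, now using the uniform bound of Lemma~\ref{lem:bound_sol} instead of Lemma~\ref{lem:uniformly_bounded_solutions}. First we check the hypotheses of Lemma~\ref{lem:bound_sol} with $g_t=-\mathcal{K}(\cdot,t)+(1-t)$ and $f=F^*(-\cdot)$. The function $g_t$ is convex, and it is $\mathcal{C}^2$ for $t<1$. It is also Lipschitz with constant independent of $t$, since each component of $\nabla_\psi\mathcal{K}(\psi,t)=-\int_X\pi^t(\psi)\,d\mu$ has absolute value at most one; the same bound holds at $t=1$, where $|\partial_{\psi_j}\mathcal{K}(\psi,1)|=\mu(\mathrm{Lag}_j(\psi))\le 1$. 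The function $f$ is $m$-strongly convex and differentiable by the standing assumption on $F^*$. Hence $\{\psi(t)\}_{t\in(0,1]}$ lies in some ball $\bar B(0,R)$. Strong concavity of $\Psi(\cdot,t)$, coming from the $-F^*(-\cdot)$ term, gives uniqueness of $\psi(t)$ for every $t\in[0,1]$, including $t=1$.

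Next, take any sequence $t_n\to1^-$. By boundedness, some subsequence satisfies $\psi(t_{n_k})\to\widetilde\psi$. For $t<1$ the first-order condition is
\[
\nabla_\psi\mathcal{K}(\psi(t),t)+\nabla F^*(-\psi(t))=0 ,
\]
where the sign is that of \eqref{P1:dual} read correctly: differentiating $-F^*(-\psi)$ gives $+\nabla F^*(-\psi)$. We pass to the limit term by term. The second term converges to $\nabla F^*(-\widetilde\psi)$ because $F^*$ is $\mathcal{C}^2$. For the first term we write
\[
\nabla_\psi\mathcal{K}(\psi(t_{n_k}),t_{n_k})-\nabla_\psi\mathcal{K}(\widetilde\psi,1)
=\big[\nabla_\psi\mathcal{K}(\psi(t_{n_k}),t_{n_k})-\nabla_\psi\mathcal{K}(\psi(t_{n_k}),1)\big]+\big[\nabla_\psi\mathcal{K}(\psi(t_{n_k}),1)-\nabla_\psi\mathcal{K}(\widetilde\psi,1)\big].
\]
The first bracket tends to zero by the locally uniform convergence $\nabla_\psi\mathcal{K}(\cdot,t)\to\nabla_\psi\mathcal{K}(\cdot,1)$ on $\bar B(0,R)$ (Proposition 53 of \cite{MerigotThibert2021}). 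The second bracket tends to zero because $\psi\mapsto\mu(\mathrm{Lag}_j(\psi))$ is continuous; this uses that $\mu$ is absolutely continuous and that $c$ is twisted, so the boundaries between Laguerre cells are $\mu$-null.

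The step needing most care is this continuity of the limiting gradient, together with the uniformity in the first bracket. If one prefers to avoid it, the cleanest alternative is a variational argument. For every fixed $\phi$, the maximality of $\psi(t_n)$ gives $\Psi(\psi(t_n),t_n)\ge\Psi(\phi,t_n)$. One then passes to the limit using the locally uniform convergence $\mathcal{K}(\cdot,t)\to\mathcal{K}(\cdot,1)$ and the uniform Lipschitz bound, which together make $\Psi(\cdot,t_n)\to\Psi(\cdot,1)$ uniformly on $\bar B(0,R)$. This yields $\Psi(\widetilde\psi,1)\ge\Psi(\phi,1)$, so $\widetilde\psi$ maximizes $\Psi(\cdot,1)$. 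Either route shows that $\widetilde\psi$ satisfies the optimality condition at $t=1$, hence $\widetilde\psi=\psi(1)$ by uniqueness. Since every sequence $t_n\to1^-$ has a subsequence with the same limit $\psi(1)$, the full limit exists and equals $\psi(1)$, which is continuity at $t=1$.
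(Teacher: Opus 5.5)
Your proof is correct and follows the paper's argument. Both go through the uniform bound of Lemma~\ref{lem:bound_sol}, extract a convergent subsequence as $t_n\to1^-$, pass to the limit in the first-order condition using the locally uniform convergence of $\nabla_\psi\mathcal{K}(\cdot,t)$, and identify the limit by uniqueness. Two of your details are more careful than the paper's write-up: you split the gradient term explicitly, using continuity of $\psi\mapsto\mu(\mathrm{Lag}_j(\psi))$, which the paper leaves implicit; and you write the optimality condition with $\nabla F^*(-\widetilde\psi)$, where the paper has $\nabla F^*(\widetilde\psi)$.
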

\begin{proof}
    Let $(t_n)$ be a strictly increasing sequence with $t_n\to 1$. By Lemma \ref{lem:bound_sol}, the family $\{\psi(t_n)\}_n$ is bounded, hence, up to a subsequence, $\psi(t_n)\to\widetilde{\psi}\in\R^N$. The gradient $\nabla_\psi\mathcal{K}(\cdot,t)$ converges to $\nabla_\psi\mathcal{K}(\cdot,1)$ locally uniformly as $t\to 1$ (see Mérigot). Passing to the limit in the first-order optimality condition for $\psi(t_n)$ yields
    \[
        \nabla_\psi\mathcal{K}(\widetilde{\psi},1) + \nabla F^*(\widetilde{\psi}) = 0\ ,
    \]
    which is precisely the first-order condition for the unreqularized dual \eqref{eq:dual unregularized} at $t=1$. Hence $\widetilde{\psi}=\psi(1)$ and the claim follows.
\end{proof}

\begin{thm}\label{thm:well-posed1}
    Let $\psi(t)$ be a solution to \eqref{P1:dual} for $t \in [0,1)$. Then the trajectory $t \mapsto \psi(t)$ is smooth and, for each $t \in [0,1)$, $\psi(t)$ is the unique solution of the Cauchy problem
    \begin{equation}\label{cauchy1}
    \boxed{
    \begin{cases}
        &D_{\psi,\psi}^2\Psi(\mathbf{\psi}(t),t)\mathbf{\psi}'(t) + \dfrac{\partial}{\partial t}\nabla_\psi\Psi(\mathbf{\psi}(t),t) = 0\ ,\quad t \in [0,1)\ , \\
        &\mathbf{\psi}(0)= \log N \cdot \mathbf{1}\ .
    \end{cases}
    }
    \end{equation}
\end{thm}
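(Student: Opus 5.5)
The plan is to mirror the proof of Theorem~\ref{thm:well-posed0}. This case is simpler because the penalty $F^*(-\psi)$ no longer involves $t$, so nothing is singular at $t=0$. The first step is to check that $(\psi,t)\mapsto\nabla_\psi\Psi(\psi,t)$ is $\mathcal C^1$, indeed smooth, on $\R^N\times[0,1)$, including the endpoint $t=0$. The only place $t$ enters is through $\mathcal K$ and the constant $-(1-t)$. The integrand $\log\sum_i e^{(\psi_i-tc_i)/(1-t)}$ is smooth in $(\psi,t)$ for $t<1$, since $1-t$ stays away from $0$ and $c$ is bounded on the compact set $X$. Hence differentiation under the integral is justified, and the formulas for $\nabla_\psi\mathcal K$, $D^2_{\psi,\psi}\mathcal K$ and $\partial_t\nabla_\psi\mathcal K$ from Section~\ref{sec:prelim} remain valid at $t=0$. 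In particular, $\mathcal K(\psi,0)=-\log\sum_i e^{\psi_i}$.

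Next I will establish uniform negative definiteness. Since $\mathcal K(\cdot,t)$ is concave and $F^*$ is $m$-strongly convex, we have $D^2_{\psi,\psi}\Psi(\psi,t)=D^2_{\psi,\psi}\mathcal K(\psi,t)-D^2F^*(-\psi)\preceq -mI$ for all $(\psi,t)\in\R^N\times[0,1)$. This gives existence and uniqueness of the maximizer $\psi(t)$ for every $t\in[0,1)$, characterized by $\nabla_\psi\Psi(\psi(t),t)=0$. It also lets me apply the implicit function theorem at every $t_0\in[0,1)$, taking a one-sided neighbourhood at $t_0=0$ or extending $\mathcal K$ smoothly to slightly negative $t$, which is harmless. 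By uniqueness of the maximizer, the local branches coincide with $\psi(t)$, so $t\mapsto\psi(t)$ is smooth on $[0,1)$. Differentiating the first-order condition then gives the ODE in \eqref{cauchy1}.

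For the initial condition, I will solve the first-order condition at $t=0$ explicitly. There $\nabla_\psi\mathcal K(\psi,0)=-\big(e^{\psi_j}/\sum_k e^{\psi_k}\big)_j$, so $\psi(0)$ is the unique solution of the resulting softmax equation coupled with $\nabla F^*(-\psi)$. The aim is to identify this solution with $\log N\cdot\mathbf 1$. I expect this to be the delicate step. Since $F$ is unscaled at $t=0$, the primal problem at $t=0$ is $\mathrm{Ent}(\gamma|\mu\otimes\sigma)+F(\nu)$, whose optimal $\nu$ need not be uniform. So I will first translate the dual condition into primal terms via $\nu_j=-\partial_{\psi_j}\mathcal K$ and the Fenchel relation $-\psi\in\partial F(\nu)$. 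I will then check carefully, including sign conventions and the normalisation of $\mathrm{Ent}$ against the counting measure, which is where $\log N$ arises, that the stated value is the solution. If it only holds under additional structure, such as $\nabla F$ being constant along the uniform measure, I will record that.

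Finally, I will prove uniqueness for the Cauchy problem. Lemma~\ref{lem:bound_sol} shows that $\psi(t)$ stays in some ball $\bar B(0,R)$. On $[0,\bar t]\times\bar B(0,R+1)$, with $\bar t<1$, the map
\[
G(\psi,t)=-\big[D^2_{\psi,\psi}\Psi(\psi,t)\big]^{-1}\partial_t\nabla_\psi\Psi(\psi,t)
\]
is $\mathcal C^1$, because the inverse is bounded in norm by $1/m$, and hence $G$ is Lipschitz. Cauchy--Lipschitz then gives a unique solution of $\psi'=G(\psi,t)$ on $[0,\bar t]$ with the given initial datum. Since the maximizer trajectory is such a solution, it is the unique one. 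Letting $\bar t\uparrow1$ concludes the proof.
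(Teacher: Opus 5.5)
Your skeleton is the paper's own: its proof just says ``as in Theorem~\ref{thm:well-posed0}, with Lemma~\ref{lem:bound_sol}''. Applying the implicit function theorem directly at $t=0$ is legitimate here, since nothing is singular. The gap is the step you leave open, and it cannot be closed as the statement is written.

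Your set-up already shows that $\psi(0)$ is the unique $\psi_0$ with $\nabla_\psi\Psi(\psi_0,0)=0$, i.e.
\[
\frac{e^{(\psi_0)_j}}{\sum_{k}e^{(\psi_0)_k}}=\partial_jF^*(-\psi_0),\qquad j=1,\dots,N .
\]
Note that $\nabla_\psi[-F^*(-\psi)]=+\nabla F^*(-\psi)$, despite the sign displayed in \eqref{P1:dual}. Here $\nu_0=\nabla F^*(-\psi_0)$ is the optimal $\nu$ at $t=0$, namely the minimizer of $\sum_i\nu_i\log\nu_i+F(\nu)$ on $\mathcal P(Y)$. Hence $\psi_0=\log N\cdot\mathbf 1$ if and only if $\nabla F^*(-\log N\cdot\mathbf 1)=\tfrac1N\mathbf 1$.
\begin{itemize}
\item This holds for $F^*(\xi)=\sum_ie^{\xi_i}$, the normalisation behind $G_1$. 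Taking $F(\nu)=\sum_i\nu_i\log\nu_i$ literally on $\R^N_+$ instead gives $F^*(\xi)=\sum_ie^{\xi_i-1}$ and $\psi_0=(\log N-1)\mathbf 1$, which is exactly the normalisation issue you worried about.
\item It fails in general. The paper's own Problem~\eqref{prob3_ODE}, presented as an instance of \eqref{P1:primal}, is correctly started from $\tfrac12\mathbf v+\log\bigl(\sum_k e^{-v_k/2}\bigr)\mathbf 1$.
\item Your proposed extra structure is not quite the right condition. $\nabla F(\tfrac1N\mathbf 1)=c\,\mathbf 1$ makes $\nu_0$ uniform, but it gives $\psi_0=-c\,\mathbf 1$, so you need precisely $c=-\log N$.
\end{itemize}
So rather than ``check and record'', commit: prove the theorem with datum $\psi_0$, and state that $\psi_0=\log N\cdot\mathbf 1$ exactly under that condition. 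The paper's one-line proof does not address this point at all.

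Second, your justification of the Cauchy--Lipschitz step fails under the standing hypothesis $F^*\in\mathcal C^2$. The bound $1/m$ makes matrix inversion Lipschitz on $\{A\preceq -mI\}$. However, $\psi\mapsto D^2F^*(-\psi)$ is only continuous, so $G$ is merely continuous in $\psi$ unless $D^2F^*$ is locally Lipschitz; likewise ``smooth'' should read $\mathcal C^1$. The paper's proof of Theorem~\ref{thm:well-posed0} makes the same tacit assumption.

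You can bypass this entirely. Let $\tilde\psi\in\mathcal C^1([0,\bar t])$ solve the ODE in \eqref{cauchy1}. The chain rule, valid because $\nabla_\psi\Psi$ is $\mathcal C^1$ jointly, gives $\frac{d}{dt}\nabla_\psi\Psi(\tilde\psi(t),t)=0$. Hence $\nabla_\psi\Psi(\tilde\psi(t),t)=\nabla_\psi\Psi(\tilde\psi(0),0)$ for all $t$. If $\tilde\psi(0)=\psi_0$, this vanishes, and strong concavity forces $\tilde\psi=\psi$. No Lipschitz bound and no appeal to Lemma~\ref{lem:bound_sol} is needed.

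The same identity shows concretely why the stated datum fails when $\log N\cdot\mathbf 1\neq\psi_0$. Any solution started there satisfies $\nabla_\psi\Psi(\tilde\psi(t),t)\equiv\nabla_\psi\Psi(\log N\cdot\mathbf 1,0)\neq 0$, so it never meets the maximizer path.
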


\begin{proof}
    The proof is the same as that of Theorem \ref{thm:well-posed0}, with Lemma \ref{lem:bound_sol} replacing Lemma \ref{lem:uniformly_bounded_solutions}.
\end{proof}

\section{Computational Examples}\label{sec:examples}

In this section we report numerical simulation results for four variational problems associated with the quadratic cost function \(c(x,y)=\|x-y\|_2^2\), unless otherwise specified. Set
\begin{align}
    \{G_1(\psi, t)\}_j &= e^{-\psi_j} - \int_X \frac{e^{\frac{\psi_j - t c(x, y_j)}{1 - t}}}{\sum_{k=1}^N e^{\frac{\psi_k - t c(x, y_k)}{1 - t}}} \, \dd\mu(x), \quad j = 1, 2, \dots, N\ ,\label{prob1_dual}\\
    \{G_2(\psi,t)\}_j &= e^{-\frac{\psi_j}{t}} - \int_X\frac{e^{\frac{\psi_j - tc(x,y_j)}{1-t}}}{\sum_{k=1}^N e^{\frac{\psi_k - tc(x,y_k)}{1-t}}}\dd\mu(x)\ \ ,\ \ j=1,2,\dots, N\ ,\label{prob2_dual}\\
     \{G_3(\psi, t)\}_j &= e^{-\psi_j} - \int_X \frac{e^{\frac{\psi_j - t c(x, y_j) - v_j}{1 - t}}}{\sum_{k=1}^N e^{\frac{\psi_k - t c(x, y_k) - v_k}{1 - t}}} \, \dd\mu(x), \quad j = 1, 2, \dots, N\ ,\label{prob3_dual}\\
     \{G_4(\psi, t)\}_j &=  \rho[\mathrm{Lag}_j(-\tfrac{\psi_j}{t})] - \int_X \frac{e^{\frac{\psi_j - t c(x, y_j) - v_j}{1 - t}}}{\sum_{k=1}^N e^{\frac{\psi_k - t c(x, y_k) - v_k}{1 - t}}} \, \dd\mu(x), \quad j = 1, 2, \dots, N\ ,\label{prob4_dual}
\end{align}
where \(v_j = \|y_j - P\|^2\) for some point \(P\in X\) and $\rho\in\mathcal{P}_{ac}(X)$ is an absolutely continuous probability measure on $X$. Note that \eqref{prob1_dual} and \eqref{prob2_dual} represent the gradients of the objective functions in the dual problems corresponding to \eqref{P1:primal} and \eqref{eq:primal}, respectively, both associated with
\[
    F(\nu)=\sum_{i=1}^N \nu_i\log\nu_i\ .
\]
The primal problem corresponding to \eqref{prob3_dual} is \eqref{P1:primal} with
\[
    F(\nu)=\sum_{i=1}^N\bigl(\nu_i\log\nu_i+\|y_i-P\|^2\bigr)\ .
\]
The primal problem associated with the gradient of the dual objective function
\eqref{prob4_dual} is
\begin{gather}\label{P4_primal}
    \inf_{\substack{\nu\in\mathcal P(Y)\\\gamma\in\Gamma(\mu,\nu)}} t\int_{X\times Y} c\,\mathrm{d}\gamma +(1-t)\,\mathrm{Ent}(\gamma\mid\mu\otimes\sigma) +tW^2_2(\rho,\nu)\ ,
\end{gather}
where 
\begin{gather*}
    W^2_2(\rho,\nu) = \inf_{\substack{\eta\in\Gamma(\rho,\nu)}} \int_{X\times Y} \|x-y\|_2^2\,\mathrm{d}\eta\ .
\end{gather*}
In the non-regularized case $t=1$, the objective of \eqref{P4_primal} is to determine a probability measure $\nu$ supported on the finite set $Y=\{y_1,\dots,y_N\}$ that balances the transport costs between the two populations. When $c$ is quadratic, this corresponds to a discrete version of the displacement interpolation of McCann \cite{McCann1997}; in another context, it represents the equilibrium distribution of contracts in a hedonic pricing model, where distributions of buyers and sellers are continuous (modeled by $\mu$ and $\rho$ respectively) while the set of possible contracts is discrete, modeled by $Y=\{y_1,...,y_N\}$ \cite{COCV_2005__11_1_57_0}.

For each case we obtain the following initial-value problem (IVP):
\renewcommand\theequation{P\arabic{equation}} \setcounter{equation}{0}
\begin{align}
        \psi'(t) &= -[\nabla G_1(\psi(t), t)]^{-1} \frac{\partial}{\partial t} G_1(\psi(t), t), \quad \psi(0) = \log N \cdot \mathbf{1}\ , 
        \label{prob1_ODE}\\
        \psi'(t) &= - [\nabla G_2(\psi(t),t)]^{-1}\frac{\partial}{\partial t}G_2(\psi(t),t), \ \psi(0) = \mathbf{0}\ , \psi'(0) = -\log{\frac{1}{N}}\times \mathbf{1}\ ,\label{prob2_ODE}\\
        \psi'(t) &= -[\nabla G_3(\psi(t), t)]^{-1} \frac{\partial}{\partial t} G_3(\psi(t), t), \quad \psi(0) = \frac{1}{2}\mathbf{v} + \log \bigg{[}\sum_{k=1}^Ne^{-\frac{1}{2}v_k}\bigg{]}\ ,\label{prob3_ODE}\\
        \psi'(t) &= -[\nabla G_4(\psi(t), t)]^{-1} \frac{\partial}{\partial t} G_4(\psi(t), t), \quad \psi(0) = \mathbf{0}\ , \psi'(0) = -\xi^*\ , \label{prob4_ODE}
\end{align}
where $\xi^*\in\mathbb{R}^N$ is a solution of the problem $\rho[\mathrm{Lag}(\xi)]=\frac{1}{N}\mathbf{1}$. The initial conditions for each problem are obtained either explicitly or from results derived in the previous sections.

In addition, we compute the derivative and Jacobian terms analytically and evaluate the arising integrals numerically using \textbf{MATLAB}'s integration routines: \texttt{integral} for one-dimensional integrals and \texttt{integral2} for two-dimensional integrals.

To solve the IVP-s above we use the Runge--Kutta method from Remark \ref{rem:RK} with parameters \(\alpha=\tfrac{1}{8}\) and \(\beta=\tfrac{1}{4}\). For each problem, we report the sup-norm of the residual at \(t=1\), namely
\[
\texttt{Error} = \|G_k(\psi(1),1)\|_{\infty}\ , \qquad k=1,2,3,4\ .
\]

\begin{rem}[Third-Order Runge-Kutta Method]\label{rem:RK}
   We consider the following two-parameter family of third-order Runge-Kutta methods, parameterized by \(\alpha\) and \(\beta\):
    \begin{gather*}
        k_1 = f(t_n, y_n)\ , \quad
        k_2 = f(t_n + c_2 h, y_n + a_{21} h k_1)\ , \quad
        k_3 = f\!\left(t_n + c_3 h, y_n + h(a_{31} k_1 + a_{32} k_2)\right)\ ,\\
        y_{n+1} = y_n + h \bigl[b_1 k_1 + b_2 k_2 + b_3 k_3\bigr]\ ,
    \end{gather*}
    where \(\alpha,\beta \neq 0\), \(\alpha \neq \beta\), and \(\alpha \neq \tfrac{2}{3}\). The corresponding coefficients are given by
    \begin{gather*}
        a_{21} = \alpha\ , \quad
        a_{31} = \frac{\beta}{\alpha}\frac{\beta - 3\alpha(1-\alpha)}{3\alpha-2}\ , \quad
        a_{32} = -\frac{\beta}{\alpha}\frac{\beta - \alpha}{3\alpha-2}\ ,\\
        b_1 = 1 - \frac{3\alpha + 3\beta - 2}{6\alpha\beta}\ , \quad
        b_2 = \frac{3\beta - 2}{6\alpha(\beta - \alpha)}\ , \quad
        b_3 = \frac{2 - 3\alpha}{6\beta(\beta - \alpha)}\ , \quad
        c_1 = \alpha\ , \quad
        c_2 = \beta\ .
    \end{gather*}
\end{rem}

\subsection{Problems in 1-d}

We perform the following numerical experiment.  For Problems \eqref{prob1_ODE} and \eqref{prob2_ODE} we take the source domain \(X=[0,1]\) and draw the target locations \(y_1,\dots,y_N\) as independent random points in the open interval \((0,5)\).  Problems \eqref{prob3_ODE} and \eqref{prob4_ODE} are treated on the same source domain \(X=[0,1]\) but with target points sampled in the interval \((0,1)\).

For Problems \eqref{prob1_ODE} and \eqref{prob2_ODE} we solve the regularized ODE formulations and, for comparison, we also solve the corresponding unregularized dual problem (which is the same in both cases) using Newton's method.  The Newton implementation and stopping criteria are as described in Remark \ref{rem:Newton1d}.  The comparison highlights differences in convergence behavior and residuals between the ODE-based integrator and the direct Newton solver. All experiments reported below use the same realization of the random targets whenever a direct comparison between methods is made.

\begin{rem}[Newton's Method in 1-d]\label{rem:Newton1d}
    Let \(G(\psi)\) denote the dual functional associated with the unregularized variational problem:
    \begin{gather*}
        \{G(\psi)\}_j = e^{-\psi_j} - \mu[\mathrm{Lag}_j(\psi)], \quad j = 1, 2, \dots, N\ .
    \end{gather*}
    To solve this problem, we apply Newton's method and iterate until convergence:
    \begin{gather}
        \psi^{(k+1)} = \psi^{(k)} - [\nabla G(\psi^{(k)})]^{-1} G(\psi^{(k)})\ .\label{prob5_Newton}
    \end{gather}
    Here,
    \begin{gather*}
        \{G(\psi)\}_j = e^{-\psi_j} - \mu[\mathrm{Lag}_j(\psi)]
        = e^{-\psi_j} - \int_{\mathrm{Lag}_j(\psi)} \mu(x)\,dx\ ,\\
        \{\nabla G(\psi)\}_{ij}
        = \int_{\mathrm{Lag}_i(\psi)\cap \mathrm{Lag}_j(\psi)}
        \frac{\mu(x)}{|\nabla_x ||x-y_i||_2^2 - \nabla_x ||x-y_j||_2^2|}\,ds
        = \frac{\mu(x_{ij})}{2|y_i-y_j|}\ , \ i\neq j\ ,\\
        \{\nabla G(\psi)\}_{ii}
        = -e^{-\psi_i}-\sum_{j=1,j\neq i}^N \{\nabla G(\psi^{(k)})\}_{ij}
        = -e^{-\psi_i}-\sum_{j=1,j\neq i}^N
        \frac{\mu(x_{ij})}{2|y_i-y_j|}\ ,
    \end{gather*}
    \normalsize
    where $i,j = 1,2,\dots, N$ and \(x_{ij} = \mathrm{Lag}_i(\psi)\cap \mathrm{Lag}_j(\psi)\). In all 1-d and 2-d experiments reported below, the Newton iterations were stopped once the following condition was satisfied:
    \[
    \lVert G(\psi^{k}) \rVert_{\infty} < 10^{-8}\ \text{ or }\ k>100
    \]
\end{rem}

Tables \ref{tab:prob1_1d} and \ref{tab:prob2_1d} report the error (sup-norm residual) and CPU time for the IVP-s \eqref{prob1_ODE} and \eqref{prob2_ODE}, respectively. Each experiment was run for the uniform density \(\dd\mu(x)=\dd x\) and the non-uniform density
\(
    \dd\mu(x)=1.8305e^{-10(x-0.5)^2}\dd x .
\)
We observe empirical first-order convergence in the regularization parameter \(t\). In several cases the error stagnates at a fixed mesh resolution, which we attribute to quadrature error: as \(t\to1\) the integrands in the time derivative and Jacobian concentrate (approach a delta), necessitating higher-precision quadrature to retain accuracy.

\begin{table}[ht]
    \centering\scalebox{0.75}{\setstretch{2.5}
    \begin{tabular}{||c||c|c|c|c||}\hline\hline
        $\Delta\mathbf{ t}$ & $N = 2$ & $N = 4$ & $N = 8$ & $N = 16$\\\hline\hline
 
        \multicolumn{5}{c}{\Large \textbf{Uniform Density:} $\dd\mu(x) = \dd x$}\vspace{0.25em}\\\cline{1-5}\cline{1-5}
        $10^{-1}$ & $2.43*10^{-3}$ ($0.17$ sec.) & $6.15*10^{-3}$ ($ 0.08$ sec.) & $1.70*10^{-2}$ ($  0.44$ sec.) & $5.11*10^{-2}$ ($  3.66$ sec.) \\\hline
        $10^{-2}$ & $2.59*10^{-4}$ ($0.27$ sec.) & $2.74*10^{-5}$ ($ 0.95$ sec.) & $1.21*10^{-3}$ ($  9.88$ sec.) & $1.74*10^{-2}$ ($ 48.44$ sec.) \\\hline
        $10^{-3}$ & $1.91*10^{-5}$ ($5.04$ sec.) & $2.59*10^{-5}$ ($28.40$ sec.) & $6.73*10^{-5}$ ($129.46$ sec.) & $1.64*10^{-3}$ ($631.83$ sec.) \\\hline
        \hline

        \multicolumn{5}{c}{\Large \textbf{Non-Uniform Density:} $\dd\mu(x) = 1.8305e^{-10(x-0.5)^2}\dd x$}\vspace{0.25em} \\ \cline{1-5}\cline{1-5}
        $10^{-1}$ & $8.41*10^{-4}$ ($0.59$ sec.) & $2.58*10^{-3}$ ($ 0.09$ sec.) & $2.67*10^{-2}$ ($  0.44$ sec.) & $7.64*10^{-2}$ ($  3.77$ sec.) \\\hline
        $10^{-2}$ & $3.26*10^{-5}$ ($0.46$ sec.) & $1.44*10^{-5}$ ($ 0.79$ sec.) & $4.43*10^{-5}$ ($  9.42$ sec.) & $8.90*10^{-3}$ ($ 45.82$ sec.) \\\hline
        $10^{-3}$ & $6.10*10^{-7}$ ($3.80$ sec.) & $3.97*10^{-6}$ ($14.30$ sec.) & $4.01*10^{-6}$ ($109.92$ sec.) & $6.14*10^{-4}$ ($572.30$ sec.) \\\hline
        \hline
    \end{tabular}}
    \caption{Error and runtime of the \eqref{prob1_ODE} solver for  1-d problems}
    \label{tab:prob1_1d}
\end{table}

\begin{table}[ht]
    \centering\scalebox{0.75}{\setstretch{2.5}
    \begin{tabular}{||c||c|c|c|c||}\hline\hline
        $\Delta\mathbf{ t}$ & $N = 2$ & $N = 4$ & $N = 8$ & $N = 16$\\\hline\hline
 
        \multicolumn{5}{c}{\Large \textbf{Uniform Density:} $\dd\mu(x) = \dd x$}\vspace{0.25em}\\\cline{1-5}\cline{1-5}
        $10^{-1}$ & $1.13*10^{-1}$ ($0.02$ sec.) & $1.21*10^{-1}$ ($ 0.05$ sec.) & $9.41*10^{-2}$ ($ 0.10$ sec.) & $6.28*10^{-2}$ ($  0.41$ sec.) \\\hline
        $10^{-2}$ & $1.06*10^{-2}$ ($0.15$ sec.) & $1.09*10^{-2}$ ($ 0.68$ sec.) & $1.15*10^{-2}$ ($ 2.78$ sec.) & $6.54*10^{-3}$ ($  7.73$ sec.) \\\hline
        $10^{-3}$ & $1.03*10^{-3}$ ($6.57$ sec.) & $1.01*10^{-3}$ ($15.01$ sec.) & $1.17*10^{-3}$ ($36.55$ sec.) & $6.73*10^{-4}$ ($122.44$ sec.) \\\hline
        \hline

        \multicolumn{5}{c}{\Large \textbf{Non-Uniform Density:} $\dd\mu(x) = 1.8305e^{-10(x-0.5)^2}\dd x$}\vspace{0.25em} \\ \cline{1-5}\cline{1-5}
        $10^{-1}$ & $1.13*10^{-1}$ ($0.14$ sec.) & $1.18*10^{-1}$ ($ 0.04$ sec.) & $8.94*10^{-2}$ ($ 0.10$ sec.) & $7.30*10^{-2}$ ($  0.43$ sec.) \\\hline
        $10^{-2}$ & $1.05*10^{-2}$ ($0.27$ sec.) & $1.09*10^{-2}$ ($ 0.62$ sec.) & $1.15*10^{-2}$ ($ 2.89$ sec.) & $7.15*10^{-3}$ ($  6.11$ sec.) \\\hline
        $10^{-3}$ & $1.03*10^{-3}$ ($6.04$ sec.) & $1.09*10^{-3}$ ($12.00$ sec.) & $1.17*10^{-3}$ ($37.80$ sec.) & $6.73*10^{-4}$ ($113.10$ sec.) \\\hline
        \hline
    \end{tabular}}
    \caption{Error and runtime of the \eqref{prob2_ODE} solver for  1-d problems}
    \label{tab:prob2_1d}
\end{table}

Table \ref{tab:prob5_1d} presents the solutions obtained via Newton's method \eqref{prob5_Newton} applied to the unregularized problems, i.e., $t=1$ in \eqref{prob1_dual} and \eqref{prob2_dual}.  Newton's algorithm is faster than the ODE methods, and, when it converges, it achieves substantially smaller residuals. However, its convergence is highly sensitive to the initial guess; for each of the initializations tested here, it failed to reach the target tolerance for \(N=16\).  

\begin{table}[ht]
    \centering\scalebox{0.6}{\setstretch{2.5}
    \begin{tabular}{||c||c|c|c|c||}\hline\hline
        \textbf{Initial Guess} & $N = 2$ & $N = 4$ & $N = 8$ & $N = 16$\\\hline\hline
 
        \multicolumn{5}{c}{\Large \textbf{Uniform Density:} $\dd\mu(x) = \dd x$}\vspace{0.25em}\\\cline{1-5}\cline{1-5}
        $10*\text{rand}(N,1)$  & $0.04         $ ($4.4*10^{-3}$ sec.) & $        0.95$ ($8.1*10^{-3}$ sec.) & $        0.23$ ($1.2*10^{-2}$ sec.) & $        0.10$ ($2.4*10^{-2}$ sec.)\\\hline
        $1*\text{rand}(N,1)$   & $4.14*10^{-13}$ ($3.7*10^{-4}$ sec.) & $2.60*10^{-3}$ ($5.9*10^{-4}$ sec.) & $2.64*10^{-3}$ ($1.4*10^{-2}$ sec.) & $0.05$ ($2.5*10^{-2}$ sec.)\\\hline
        $0.1*\text{rand}(N,1)$ & $1.39*10^{-13}$ ($3.6*10^{-4}$ sec.) & $0.01        $ ($7.8*10^{-3}$ sec.) & $0.02        $ ($1.4*10^{-2}$ sec.) & $0.05$ ($2.9*10^{-2}$ sec.)\\\hline
        $0.01*\text{rand}(N,1)$& $1.06*10^{-12}$ ($1.1*10^{-3}$ sec.) & $0.07        $ ($6.9*10^{-3}$ sec.) & $0.02        $ ($1.3*10^{-2}$ sec.) & $0.06$ ($2.8*10^{-2}$ sec.)\\\hline
        $\mathbf{0}$           & $7.91*10^{-13}$ ($3.6*10^{-4}$ sec.) & $0.07        $ ($6.9*10^{-3}$ sec.) & $0.02        $ ($1.4*10^{-2}$ sec.) & $0.06$ ($2.4*10^{-2}$ sec.)\\\hline
        \hline

        \multicolumn{5}{c}{\Large \textbf{Non-Uniform Density:} $\dd\mu(x) = 1.8305e^{-10(x-0.5)^2}\dd x$}\vspace{0.25em} \\ \cline{1-5}\cline{1-5}
        $10*\text{rand}(N,1)$  & $2.16*10^{-13}$ ($2.2*10^{-3}$ sec.) & $0.03         $ ($9.0*10^{-3}$ sec.) & $4.06*10^{-4}$ ($1.1*10^{-3}$ sec.) & $        0.33$ ($1.8*10^{-2}$ sec.)\\\hline
        $1*\text{rand}(N,1)$   & $1.66*10^{-13}$ ($2.8*10^{-3}$ sec.) & $4.88*10^{-13}$ ($7.2*10^{-4}$ sec.) & $4.24*10^{-4}$ ($1.0*10^{-3}$ sec.) & $8.33*10^{-4}$ ($2.3*10^{-3}$ sec.)\\\hline
        $0.1*\text{rand}(N,1)$ & $2.24*10^{-13}$ ($2.0*10^{-3}$ sec.) & $4.87*10^{-13}$ ($1.5*10^{-3}$ sec.) & $0.01        $ ($1.3*10^{-2}$ sec.) & $        0.10$ ($2.6*10^{-2}$ sec.)\\\hline
        $0.01*\text{rand}(N,1)$& $1.64*10^{-13}$ ($1.9*10^{-3}$ sec.) & $4.78*10^{-13}$ ($3.2*10^{-3}$ sec.) & $0.01        $ ($1.4*10^{-2}$ sec.) & $0.01        $ ($2.8*10^{-2}$ sec.)\\\hline
        $\mathbf{0}$           & $2.16*10^{-13}$ ($1.7*10^{-3}$ sec.) & $5.14*10^{-13}$ ($7.6*10^{-4}$ sec.) & $0.01        $ ($1.4*10^{-2}$ sec.) & $0.01        $ ($2.9*10^{-2}$ sec.)\\\hline
        \hline
    \end{tabular}}
    \caption{Error and runtime of the Newton solver \eqref{prob5_Newton} for  1-d problems} \label{tab:prob5_1d}
\end{table}

Table \ref{tab:prob3_1d} demonstrates results for Problem \eqref{prob3_ODE} with \(P=\tfrac{1}{2}\); here we again observe first-order convergence on average. Figure \ref{fig:prob3_1d} depicts the evolution of the Laguerre cells as $t\to 1$, showing a non-monotone transition as one approaches the unregularized partition.

\begin{table}[ht]
    \centering\scalebox{0.75}{\setstretch{2.5}
    \begin{tabular}{||c||c|c|c|c||}\hline\hline
        $\Delta\mathbf{ t}$ & $N = 2$ & $N = 4$ & $N = 8$ & $N = 16$\\\hline\hline
 
        \multicolumn{5}{c}{\Large \textbf{Uniform Density:} $\dd\mu(x) = \dd x$}\vspace{0.25em}\\\cline{1-5}\cline{1-5}
        $10^{-1}$ & $1.72*10^{- 3}$ ($ 0.08$ sec.) & $2.75*10^{-2}$ ($ 0.05$ sec.) & $7.04*10^{-2}$ ($ 0.12$ sec.) & $3.76*10^{-2}$ ($  0.489$ sec.) \\\hline
        $10^{-2}$ & $7.85*10^{- 7}$ ($ 1.33$ sec.) & $2.75*10^{-3}$ ($ 1.97$ sec.) & $4.29*10^{-3}$ ($ 2.72$ sec.) & $1.52*10^{-2}$ ($ 10.03$ sec.) \\\hline
        $10^{-3}$ & $3.65*10^{-10}$ ($13.26$ sec.) & $2.85*10^{-8}$ ($27.20$ sec.) & $2.53*10^{-4}$ ($67.84$ sec.) & $1.70*10^{-3}$ ($229.72$ sec.) \\\hline
        \hline

        \multicolumn{5}{c}{\Large \textbf{Non-Uniform Density:} $\dd\mu(x) = 1.8305e^{-10(x-0.5)^2}\dd x$}\vspace{0.25em} \\ \cline{1-5}\cline{1-5}
        $10^{-1}$ & $4.56*10^{-3}$ ($ 0.14$ sec.) & $4.02*10^{-2}$ ($ 0.05$ sec.) & $7.03*10^{-2}$ ($ 0.12$ sec.) & $5.15*10^{-2}$ ($  0.47$ sec.) \\\hline
        $10^{-2}$ & $1.42*10^{-6}$ ($ 1.27$ sec.) & $1.15*10^{-3}$ ($ 0.90$ sec.) & $2.10*10^{-2}$ ($ 2.84$ sec.) & $2.05*10^{-2}$ ($ 10.14$ sec.) \\\hline
        $10^{-3}$ & $1.31*10^{-9}$ ($13.33$ sec.) & $1.86*10^{-6}$ ($24.30$ sec.) & $3.82*10^{-3}$ ($60.39$ sec.) & $3.99*10^{-3}$ ($230.44$ sec.) \\\hline
        \hline
    \end{tabular}}
    \caption{Error and runtime of the \eqref{prob3_ODE} solver with for  1-d problems with $P=0.5$}\label{tab:prob3_1d}
\end{table}

\begin{figure}[ht]
    \centering
    \begin{subfigure}{.495\textwidth}
        \centering
        \includegraphics[width=\linewidth]{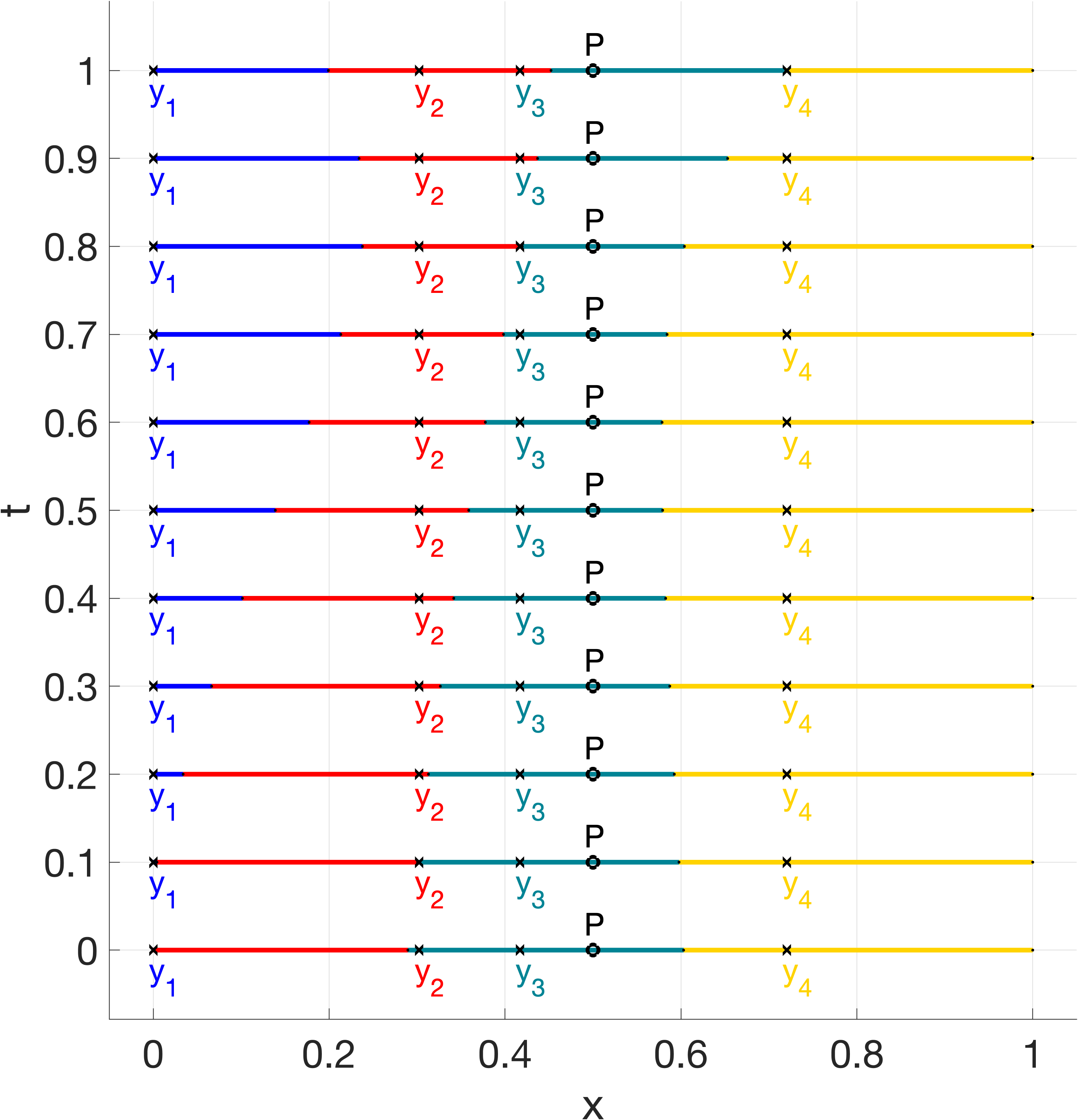}
        \caption{Uniform density}
    \end{subfigure}
    \begin{subfigure}{.495\textwidth}
        \centering
        \includegraphics[width=\linewidth]{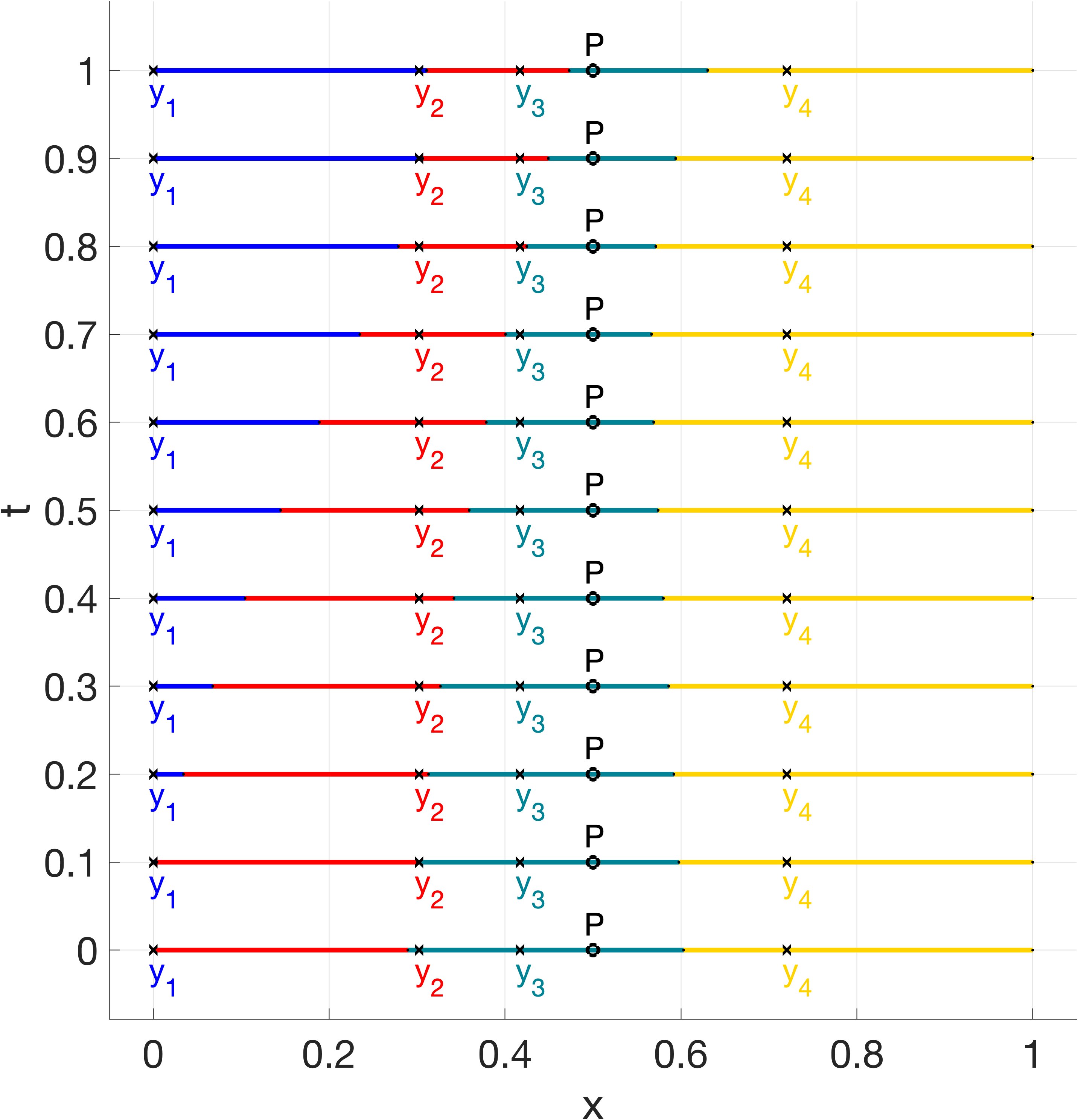}
        \caption{Non-uniform density}
    \end{subfigure}
    \caption{Time evolution of Laguerre cells for Problem \eqref{prob3_ODE} with $P = \tfrac12$ and $N = 4$}
    \label{fig:prob3_1d}
\end{figure}

Table \ref{tab:prob4_1d} reports the error and runtime for the solver \eqref{prob4_ODE} with cost function \(c(x,y)=\|x-y\|_2^3\) and measures \(\mathrm{d}\mu(x)=\mathrm{d}x\) and \(\mathrm{d}\rho(x)=1.8305 \exp\big(-10(x-0.5)^2\big)\,\mathrm{d}x\). The results demonstrate convergence with respect to $t$, with the computational time increasing notably as the temporal discretization is refined.

\begin{table}[ht]
    \centering\scalebox{0.75}{\setstretch{2.5}
    \begin{tabular}{||c||c|c|c|c||}\hline\hline
        $\Delta\mathbf{ t}$ & $N = 3$ & $N = 6$ & $N = 12$ & $N = 24$\\\hline\hline
        $10^{-1}$ & $1.83*10^{-2}$ ($ 0.16$ sec.) & $9.56*10^{-2}$ ($ 0.56$ sec.) & $1.22*10^{-1}$ ($  3.96$ sec.) & $9.13*10^{-2}$ ($  27.85$ sec.) \\\hline
        $10^{-2}$ & $3.14*10^{-3}$ ($ 1.00$ sec.) & $4.51*10^{-2}$ ($ 5.22$ sec.) & $4.88*10^{-2}$ ($ 41.46$ sec.) & $8.31*10^{-2}$ ($ 301.33$ sec.) \\\hline
        $10^{-3}$ & $8.20*10^{-4}$ ($10.50$ sec.) & $2.99*10^{-2}$ ($52.02$ sec.) & $2.99*10^{-2}$ ($415.49$ sec.) & $4.09*10^{-2}$ ($3099.79$ sec.) \\\hline
        \hline
    \end{tabular}}
    \caption{Error and time for \eqref{prob4_ODE} with $\dd\mu(x) = \dd x$ and $\dd\rho(x) = 1.8305e^{-10(x-0.5)^2}\dd x$}\label{tab:prob4_1d}
\end{table}

\subsection{Problems in 2-d}

We now consider the two-dimensional setting. For Problems \eqref{prob1_ODE} and \eqref{prob2_ODE} the source domain is \(X=[0,1]\times[0,1]\) and the target locations \(y_1,\dots,y_N\) are sampled in $\left[0,\tfrac{3}{2}\right]\times\left[0,\tfrac{3}{2}\right]$. Problems \eqref{prob3_ODE} and \eqref{prob4_ODE} are posed on the same source domain with target points sampled in \(X\). 

For comparison, the unregularized duals of Problems \eqref{prob1_ODE} and \eqref{prob2_ODE} are also solved by Newton's method. With the quadratic cost \(c(x,y)=\|x-y\|_2^2\), Laguerre cells are computed efficiently via the lifting algorithm (see \cite{levy2015numerical}), and the Jacobian is assembled from the analytic expressions in \cite{kitagawa2019convergence,de2019differentiation}. 

\renewcommand\theequation{\arabic{equation}} \setcounter{equation}{24}
Tables \ref{tab:prob1_2d} and \ref{tab:prob2_2d} report the errors and computation times for the IVP-s \eqref{prob1_ODE} and \eqref{prob2_ODE}, respectively. Each experiment was run for the uniform density \(\dd\mu(x)=\dd x_1\dd x_2\) and for the non-uniform density
\begin{gather}\label{NonUnifDens}
    \dd\mu(x)=3.3508\exp\bigl(-10(x_1-0.5)^2-10(x_2-0.5)^2\bigr)\,\dd x_1\dd x_2\ .
\end{gather}
As in one dimension, we observe empirical first-order convergence with respect to the regularization parameter \(t\). Stagnation of the error at \(\Delta t=10^{-3}\) in several instances is attributable to integration error: as \(t\to1\) the integrands appearing in the time derivative and in the Jacobian approach delta functions, requiring finer quadrature and making accurate two-dimensional integration increasingly critical. Figure \ref{fig:prob1_LagCells} illustrates the evolution of the Laguerre cells as a function of the regularization parameter $t$ under the nonuniform density \eqref{NonUnifDens} for Problem \eqref{prob1_ODE}. Figure \ref{fig:MeasLagCells} shows the corresponding evolution of the measures, while Figure \ref{fig:RLagCells} depicts the evolution of the smoothed Laguerre cell associated with a target point $y_i$,
\begin{gather*}
    \mathrm{RLag}^t_i(\psi) := \dfrac{\exp\!\left(\frac{\psi_i - t\,c(x,y_i)}{1 - t}\right)}{\sum_{k=1}^N \exp\!\left(\frac{\psi_k - t\,c(x,y_k)}{1 - t}\right)} \ .
\end{gather*}

\begin{table}[ht]
    \centering\scalebox{0.75}{\setstretch{2.5}
    \begin{tabular}{||c||c|c|c|c||}\hline\hline
        $\Delta\mathbf{ t}$ & $N = 2$ & $N = 4$ & $N = 8$ & $N = 16$\\\hline\hline
 
        \multicolumn{5}{c}{\Large \textbf{Uniform Density:} $\dd\mu(x) = \dd x$}\vspace{0.25em}\\\cline{1-5}\cline{1-5}
        $10^{-1}$ & $9.99*10^{-4}$ ($ 0.51$ sec.) & $4.01*10^{-3}$ ($  0.63$ sec.) & $1.96*10^{-2}$ ($  2.19$ sec.) & $9.04*10^{-3}$ ($   8.91$ sec.) \\\hline
        $10^{-2}$ & $2.58*10^{-6}$ ($ 1.88$ sec.) & $2.30*10^{-7}$ ($ 14.40$ sec.) & $2.56*10^{-4}$ ($ 39.80$ sec.) & $1.54*10^{-4}$ ($ 134.23$ sec.) \\\hline
        $10^{-3}$ & $2.08*10^{-8}$ ($22.48$ sec.) & $9.17*10^{-4}$ ($168.17$ sec.) & $5.53*10^{-4}$ ($472.14$ sec.) & $2.44*10^{-5}$ ($1547.47$ sec.) \\\hline
        \hline

        \multicolumn{5}{c}{\Large \textbf{Non-Uniform Density:} $\dd\mu(x) = 3.3508e^{-10(x_1-0.5)^2-10(x_2-0.5)^2}\dd x_1\dd x_2$}\vspace{0.25em} \\ \cline{1-5}\cline{1-5}
        $10^{-1}$ & $2.12*10^{-3}$ ($ 0.20$ sec.) & $4.19*10^{-3}$ ($  1.07$ sec.) & $1.86*10^{-2}$ ($  4.52$ sec.) & $1.19*10^{-2}$ ($  20.17$ sec.) \\\hline
        $10^{-2}$ & $8.50*10^{-8}$ ($ 2.43$ sec.) & $1.14*10^{-6}$ ($ 19.10$ sec.) & $2.92*10^{-4}$ ($ 64.44$ sec.) & $3.40*10^{-4}$ ($ 250.97$ sec.) \\\hline
        $10^{-3}$ & $4.68*10^{-7}$ ($33.55$ sec.) & $8.75*10^{-4}$ ($212.03$ sec.) & $8.20*10^{-4}$ ($717.40$ sec.) & $8.65*10^{-4}$ ($2686.31$ sec.) \\\hline
        \hline
    \end{tabular}}
    \caption{Error and runtime of the \eqref{prob1_ODE} solver for  2-d problems}
    \label{tab:prob1_2d}
\end{table}

\begin{table}[ht]
    \centering\scalebox{0.75}{\setstretch{2.5}
    \begin{tabular}{||c||c|c|c|c||}\hline\hline
        $\Delta\mathbf{ t}$ & $N = 2$ & $N = 4$ & $N = 8$ & $N = 16$\\\hline\hline
 
        \multicolumn{5}{c}{\Large \textbf{Uniform Density:} $\dd\mu(x) = \dd x$}\vspace{0.25em}\\\cline{1-5}\cline{1-5}
        $10^{-1}$ & $6.69*10^{-3}$ ($ 0.51$ sec.) & $1.71*10^{-2}$ ($  0.72$ sec.) & $2.11*10^{-2}$ ($  2.25$ sec.) & $1.08*10^{-2}$ ($   8.90$ sec.) \\\hline
        $10^{-2}$ & $6.93*10^{-4}$ ($ 2.13$ sec.) & $1.30*10^{-3}$ ($ 15.39$ sec.) & $6.83*10^{-4}$ ($ 41.12$ sec.) & $4.65*10^{-4}$ ($ 134.85$ sec.) \\\hline
        $10^{-3}$ & $6.87*10^{-5}$ ($24.40$ sec.) & $3.75*10^{-4}$ ($174.99$ sec.) & $5.68*10^{-4}$ ($478.46$ sec.) & $5.00*10^{-5}$ ($1566.33$ sec.) \\\hline
        \hline

        \multicolumn{5}{c}{\Large \textbf{Non-Uniform Density:} $\dd\mu(x) = 3.3508e^{-10(x_1-0.5)^2-10(x_2-0.5)^2}\dd x_1\dd x_2$} \vspace{0.25em} \\ \cline{1-5}\cline{1-5}
        $10^{-1}$ & $5.61*10^{-3}$ ($ 0.31$ sec.) & $1.20*10^{-2}$ ($  1.06$ sec.) & $2.07*10^{-2}$ ($  4.48$ sec.) & $1.36*10^{-2}$ ($  19.62$ sec.) \\\hline
        $10^{-2}$ & $6.96*10^{-4}$ ($ 2.58$ sec.) & $1.30*10^{-3}$ ($ 20.05$ sec.) & $7.03*10^{-4}$ ($ 67.45$ sec.) & $5.16*10^{-4}$ ($ 258.25$ sec.) \\\hline
        $10^{-3}$ & $6.89*10^{-5}$ ($36.01$ sec.) & $1.36*10^{-3}$ ($224.61$ sec.) & $8.59*10^{-4}$ ($746.58$ sec.) & $8.81*10^{-4}$ ($2772.25$ sec.) \\\hline
        \hline
    \end{tabular}}
    \caption{Error and runtime of the \eqref{prob2_ODE} solver for  2-d problems}
    \label{tab:prob2_2d}
\end{table}

\begin{figure}[ht]
    \centering
    \begin{subfigure}{.4\textwidth}
        \centering
        \includegraphics[width=0.8\linewidth]{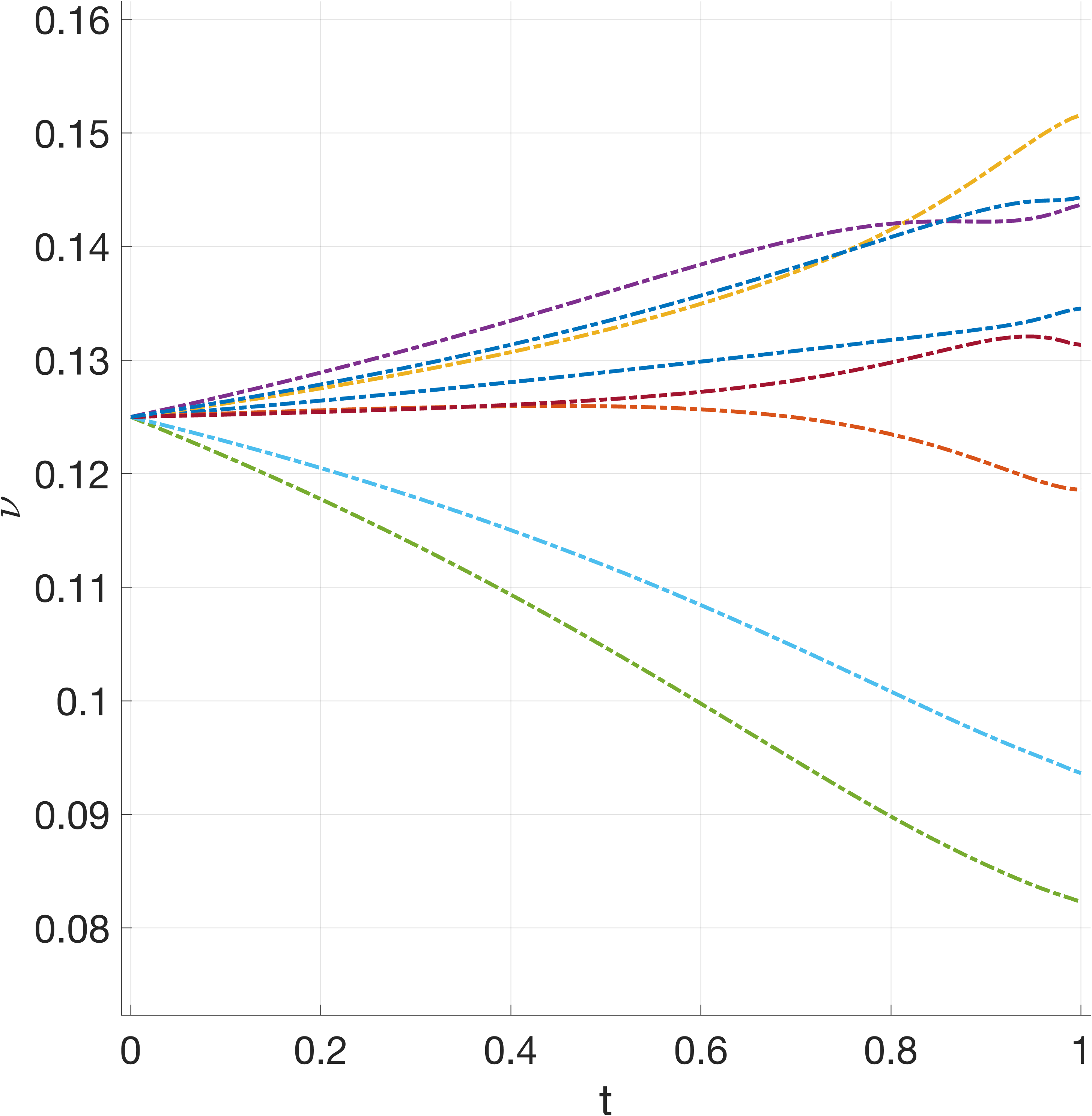}
        \caption{Measure of Laguerre cells, $N=8$}
        \label{fig:MeasLagCells}
    \end{subfigure}
    \begin{subfigure}{.58\textwidth}
        \centering
        \includegraphics[width=0.8\linewidth]{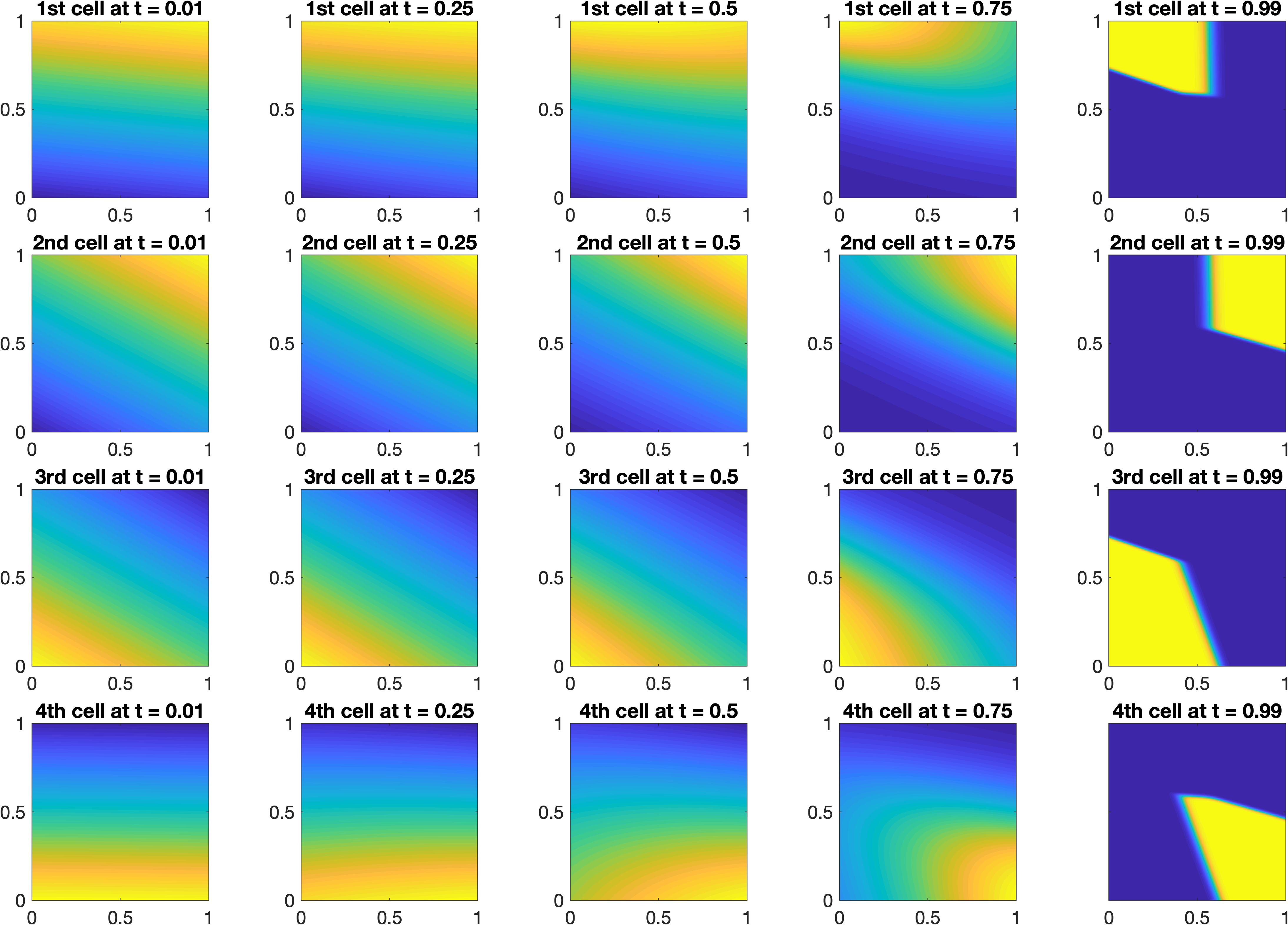}
        \caption{Smoothed Laguerre cells, $N=4$}
        \label{fig:RLagCells}
    \end{subfigure}
    \caption{Time evolution of Laguerre cells for Problem \eqref{prob1_ODE} with nonuniform density \eqref{NonUnifDens}}
    \label{fig:prob1_LagCells}
\end{figure}

Table \ref{tab:prob5_2d} presents results for the Newton solver \eqref{prob5_Newton}. These results may be compared with those in Tables \ref{tab:prob1_2d} and \ref{tab:prob2_2d}, which correspond to the same benchmark problems. When Newton converges it attains smaller residuals, but in two dimensions its convergence is noticeably more sensitive to the initial guess; with the present initialization Newton's method failed to reach the prescribed tolerance for \(N=8\) under the non-uniform measure.  A possible practical strategy could be combining both approaches  to yield a desirable combination of robustness with improved accuracy,  by  using the ODE solution to initialize Newton.

\begin{table}[ht]
    \centering\scalebox{0.75}{\setstretch{2.5}
    \begin{tabular}{||c||c|c|c|c||}\hline\hline
        \textbf{Initial Guess} & $N = 2$ & $N = 4$ & $N = 8$ & $N = 16$\\\hline\hline
 
        \multicolumn{5}{c}{\Large \textbf{Uniform Density:} $\dd\mu(x) = \dd x$}\vspace{0.25em}\\\cline{1-5}\cline{1-5}
        $10*\text{rand}(N,1)$  &                           NAN &                           NAN &                            NAN & NAN\\\hline
        $1*\text{rand}(N,1)$   & $5.51*10^{-10}$ ($0.39$ sec.) & $2.08*10^{-12}$ ($2.40$ sec.) &                            NAN & NAN\\\hline
        $0.1*\text{rand}(N,1)$ & $2.61*10^{- 9}$ ($0.45$ sec.) & $9.41*10^{-10}$ ($2.44$ sec.) & $1.45*10^{-11}$ ($14.11$ sec.) & NAN\\\hline
        $0.01*\text{rand}(N,1)$& $8.05*10^{- 9}$ ($0.46$ sec.) & $1.71*10^{- 9}$ ($2.41$ sec.) & $2.21*10^{-11}$ ($15.36$ sec.) & NAN\\\hline
        $\mathbf{0}$           & $9.07*10^{- 9}$ ($0.45$ sec.) & $1.83*10^{- 9}$ ($2.37$ sec.) &                            NAN & NAN\\\hline
        \hline

        \multicolumn{5}{c}{\Large \textbf{Non-Uniform Density:} $\dd\mu(x) = 3.3508e^{-10(x_1-0.5)^2-10(x_2-0.5)^2}\dd x_1\dd x_2$}\vspace{0.25em} \\ \cline{1-5}\cline{1-5}
        $10*\text{rand}(N,1)$  &                           NAN &                          NAN & NAN & NAN \\\hline
        $1*\text{rand}(N,1)$   & $5.01*10^{-10}$ ($0.42$ sec.) & $4.49*10^{-9}$ ($2.97$ sec.) & NAN & NAN \\\hline
        $0.1*\text{rand}(N,1)$ & $4.69*10^{- 9}$ ($0.49$ sec.) & $1.21*10^{-9}$ ($2.53$ sec.) & NAN & NAN \\\hline
        $0.01*\text{rand}(N,1)$& $3.33*10^{-16}$ ($0.61$ sec.) & $2.33*10^{-9}$ ($2.57$ sec.) & NAN & NAN \\\hline
        $\mathbf{0}$           & $6.66*10^{-16}$ ($0.65$ sec.) & $2.53*10^{-9}$ ($2.55$ sec.) & NAN & NAN \\\hline
        \hline
    \end{tabular}}
    \caption{Error and runtime of the Newton solver \eqref{prob5_Newton} for  2-d problems} \label{tab:prob5_2d}
\end{table}

Table \ref{tab:prob3_2d} illustrates results for Problem \eqref{prob3_ODE} with \(\mathbf{P}=(0.5,0.5)^{\operatorname{T}}\); here we observe on average the second order convergence behavior. Figure \ref{fig:prob3_2d_unif} illustrates the evolution of the Laguerre cells as \(t\to1\) for uniform ($\dd\mu(x) = \dd x$) measure.

\begin{table}[ht]
    \centering\scalebox{0.75}{\setstretch{2.5}
    \begin{tabular}{||c||c|c|c|c||}\hline\hline
        $\Delta\mathbf{ t}$ & $N = 2$ & $N = 4$ & $N = 8$ & $N = 16$\\\hline\hline
 
        \multicolumn{5}{c}{\Large \textbf{Uniform Density:} $\dd\mu(x) = \dd x$}\vspace{0.25em}\\\cline{1-5}\cline{1-5}
        $10^{-1}$ & $5.10*10^{-5}$ ($ 0.08$ sec.) & $4.45*10^{-2}$ ($ 0.28$ sec.) & $2.83*10^{-2}$ ($  1.76$ sec.) & $1.03*10^{-2}$ ($   8.74$ sec.) \\\hline
        $10^{-2}$ & $4.64*10^{-7}$ ($ 2.03$ sec.) & $6.35*10^{-5}$ ($ 7.16$ sec.) & $1.47*10^{-4}$ ($ 32.17$ sec.) & $4.11*10^{-4}$ ($ 113.56$ sec.) \\\hline
        $10^{-3}$ & $1.07*10^{-8}$ ($27.45$ sec.) & $7.74*10^{-6}$ ($94.40$ sec.) & $6.22*10^{-7}$ ($382.60$ sec.) & $4.41*10^{-5}$ ($1308.95$ sec.) \\\hline
        \hline

        \multicolumn{5}{c}{\Large \textbf{Non-Uniform Density:} $\dd\mu(x) = 3.3508e^{-10(x_1-0.5)^2-10(x_2-0.5)^2}\dd x_1\dd x_2$}\vspace{0.25em} \\ \cline{1-5}\cline{1-5}
        $10^{-1}$ & $2.02*10^{-3}$ ($ 0.27$ sec.) & $5.17*10^{-3}$ ($  1.04$ sec.) & $1.88*10^{-2}$ ($  4.49$ sec.) & $1.09*10^{-2}$ ($  21.28$ sec.) \\\hline
        $10^{-2}$ & $5.00*10^{-7}$ ($ 3.39$ sec.) & $3.24*10^{-5}$ ($ 17.38$ sec.) & $1.28*10^{-3}$ ($ 59.73$ sec.) & $9.72*10^{-4}$ ($ 239.21$ sec.) \\\hline
        $10^{-3}$ & $2.08*10^{-8}$ ($39.50$ sec.) & $3.43*10^{-6}$ ($190.07$ sec.) & $4.50*10^{-4}$ ($656.52$ sec.) & $1.64*10^{-7}$ ($2527.79$ sec.) \\\hline
        \hline
    \end{tabular}}
    \caption{Error and runtime of the \eqref{prob3_ODE} solver for  2-d problems with $P = \icol{0.5\\0.5}$}\label{tab:prob3_2d}
\end{table}

\begin{figure}[ht]
    \centering
    \begin{subfigure}{.19\textwidth}
        \centering
        \includegraphics[width=\linewidth]{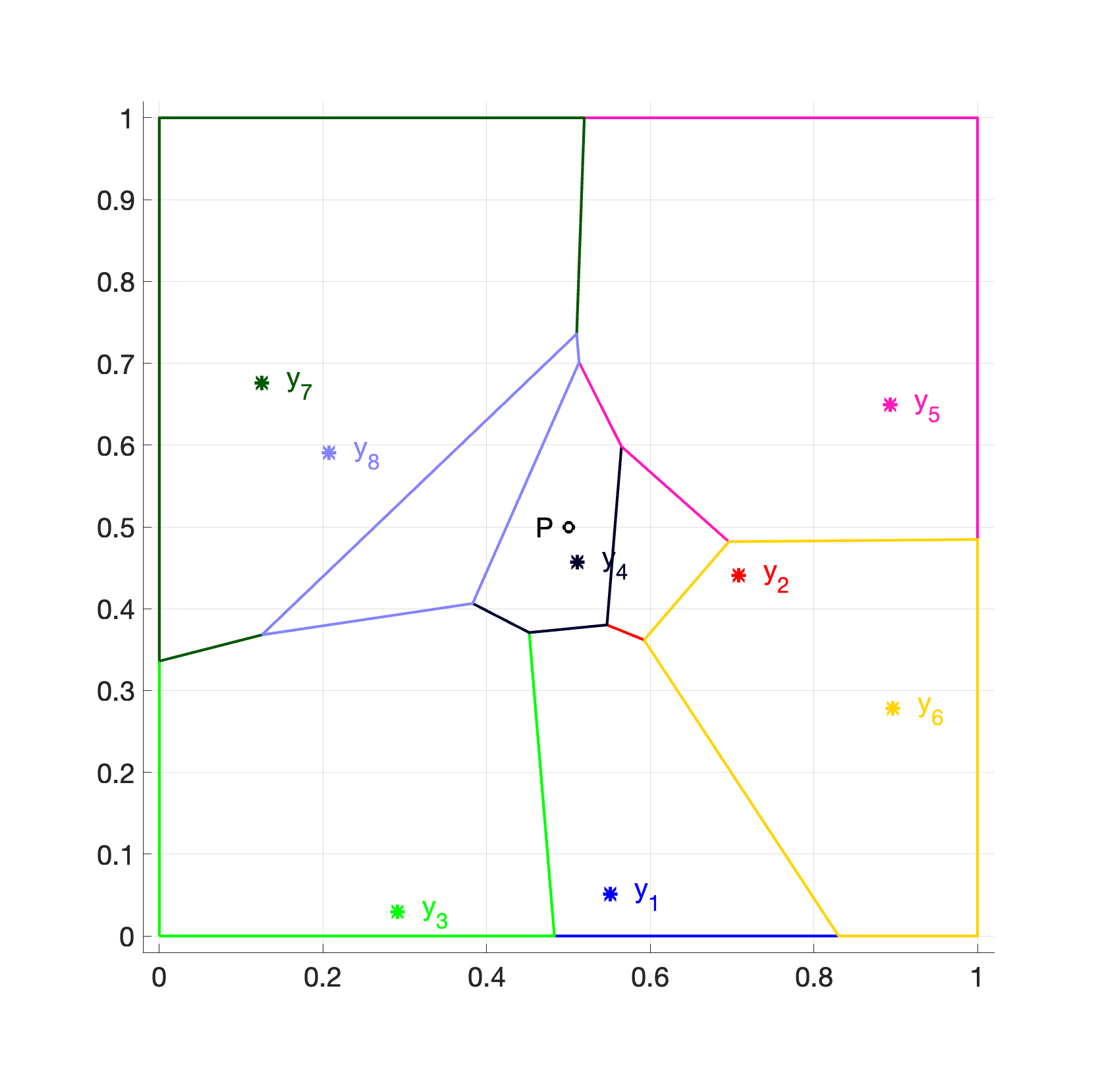}
        \caption{$t=0$}
    \end{subfigure}
    \begin{subfigure}{.19\textwidth}
        \centering
        \includegraphics[width=\linewidth]{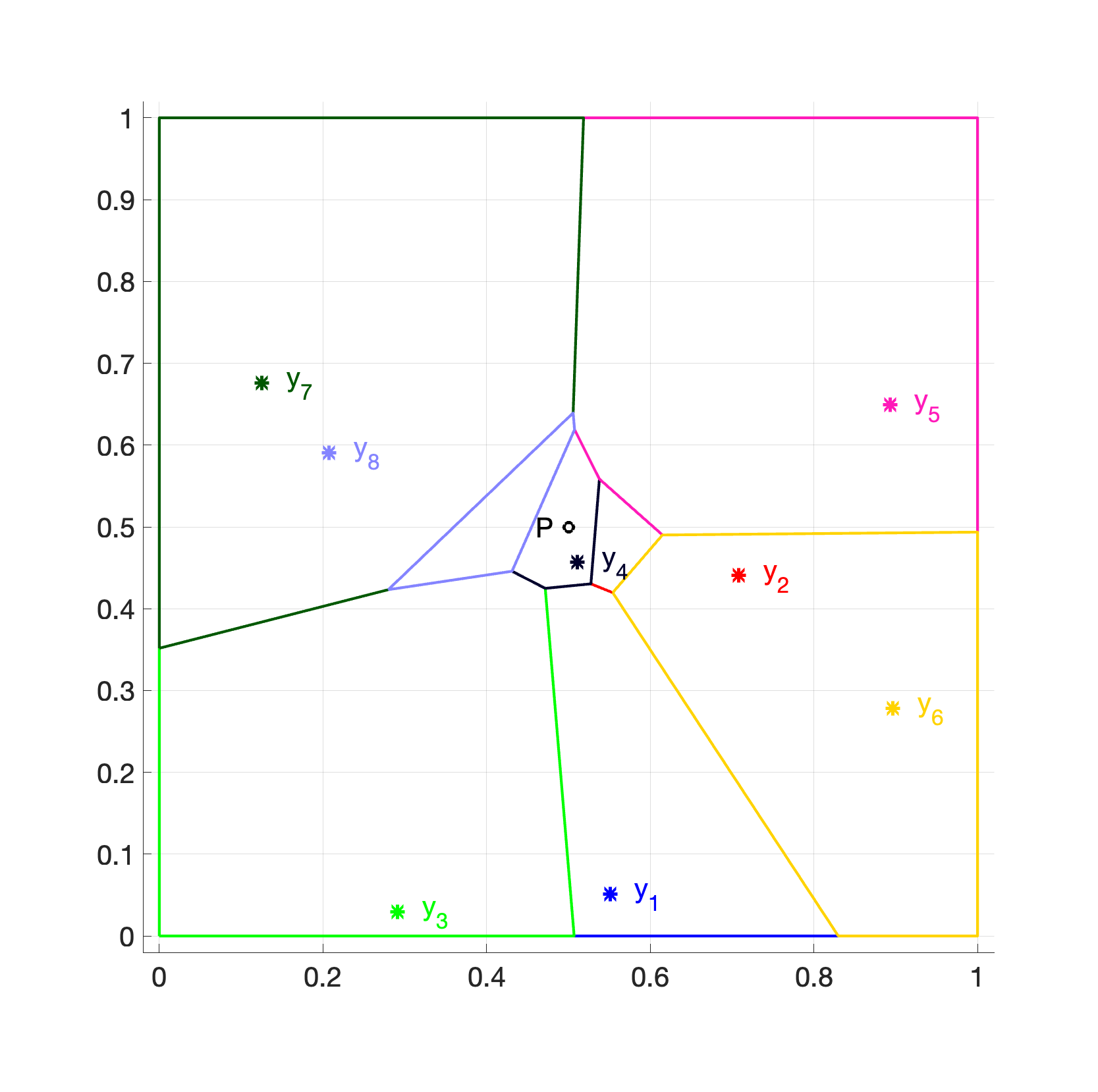}
        \caption{$t=0.25$}
    \end{subfigure}
    \begin{subfigure}{.19\textwidth}
        \centering
        \includegraphics[width=\linewidth]{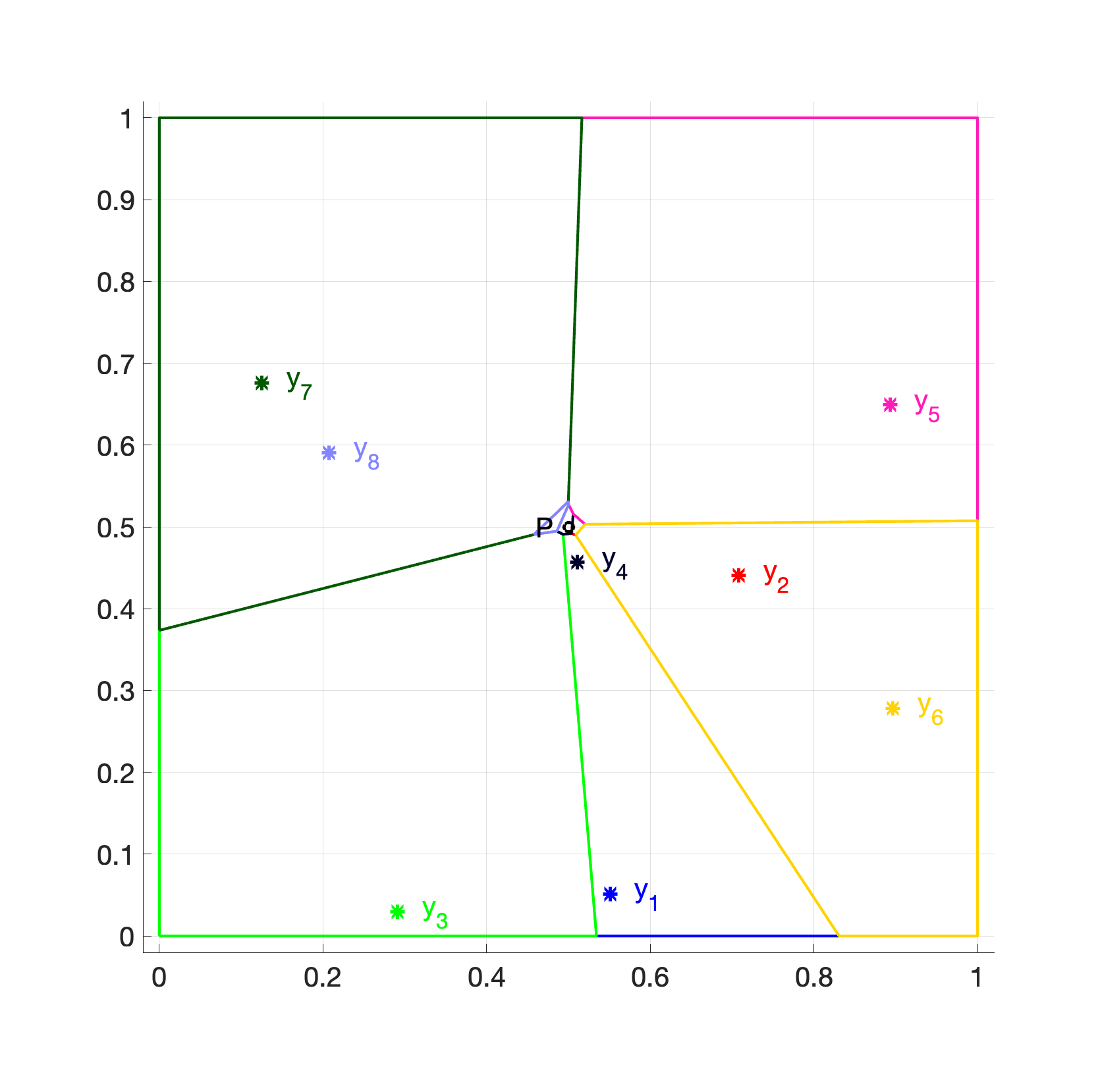}
        \caption{$t=0.5$}
    \end{subfigure}
    \begin{subfigure}{.19\textwidth}
        \centering
        \includegraphics[width=\linewidth]{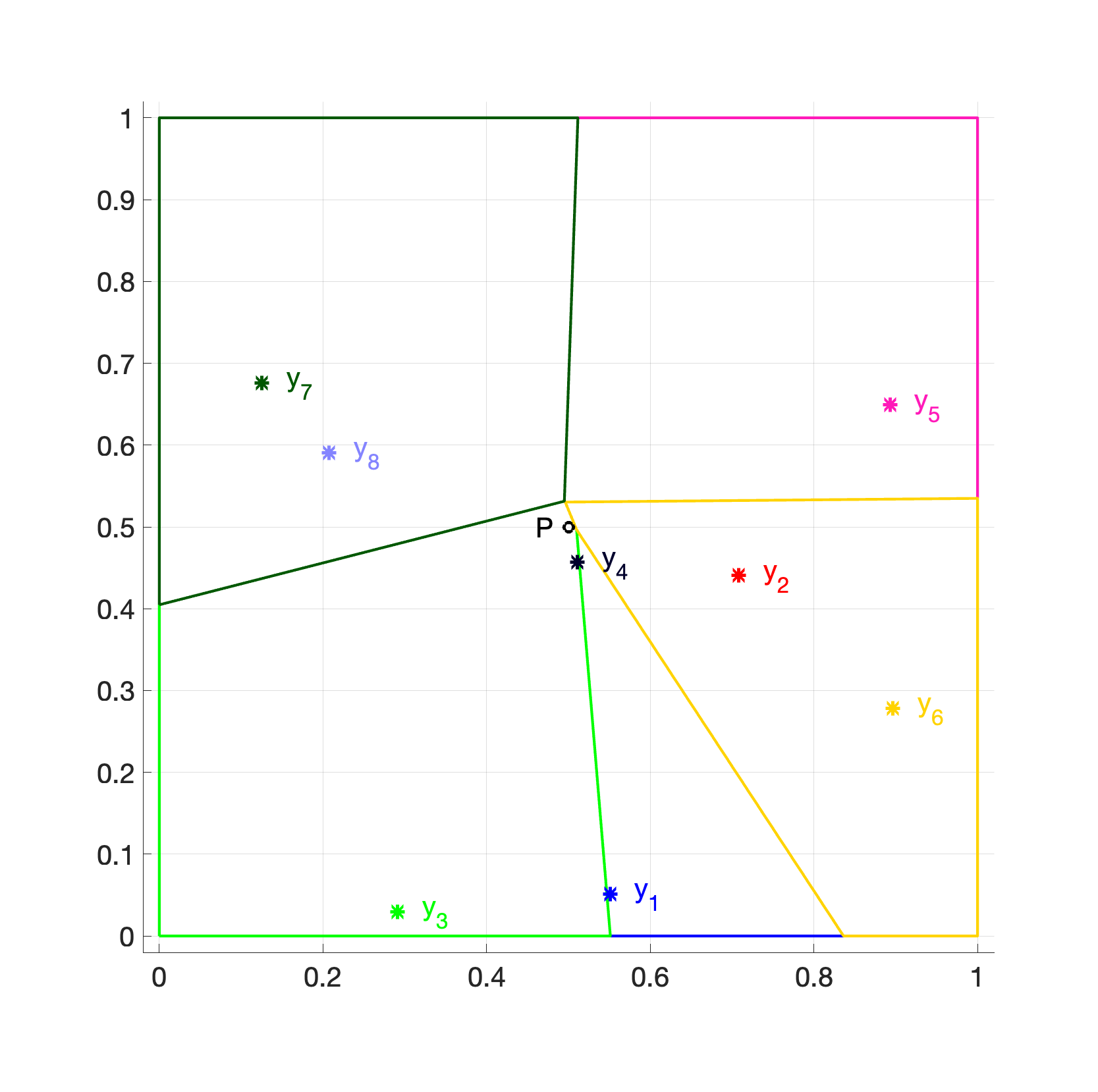}
        \caption{$t=0.75$}
    \end{subfigure}
    \begin{subfigure}{.19\textwidth}
        \centering
        \includegraphics[width=\linewidth]{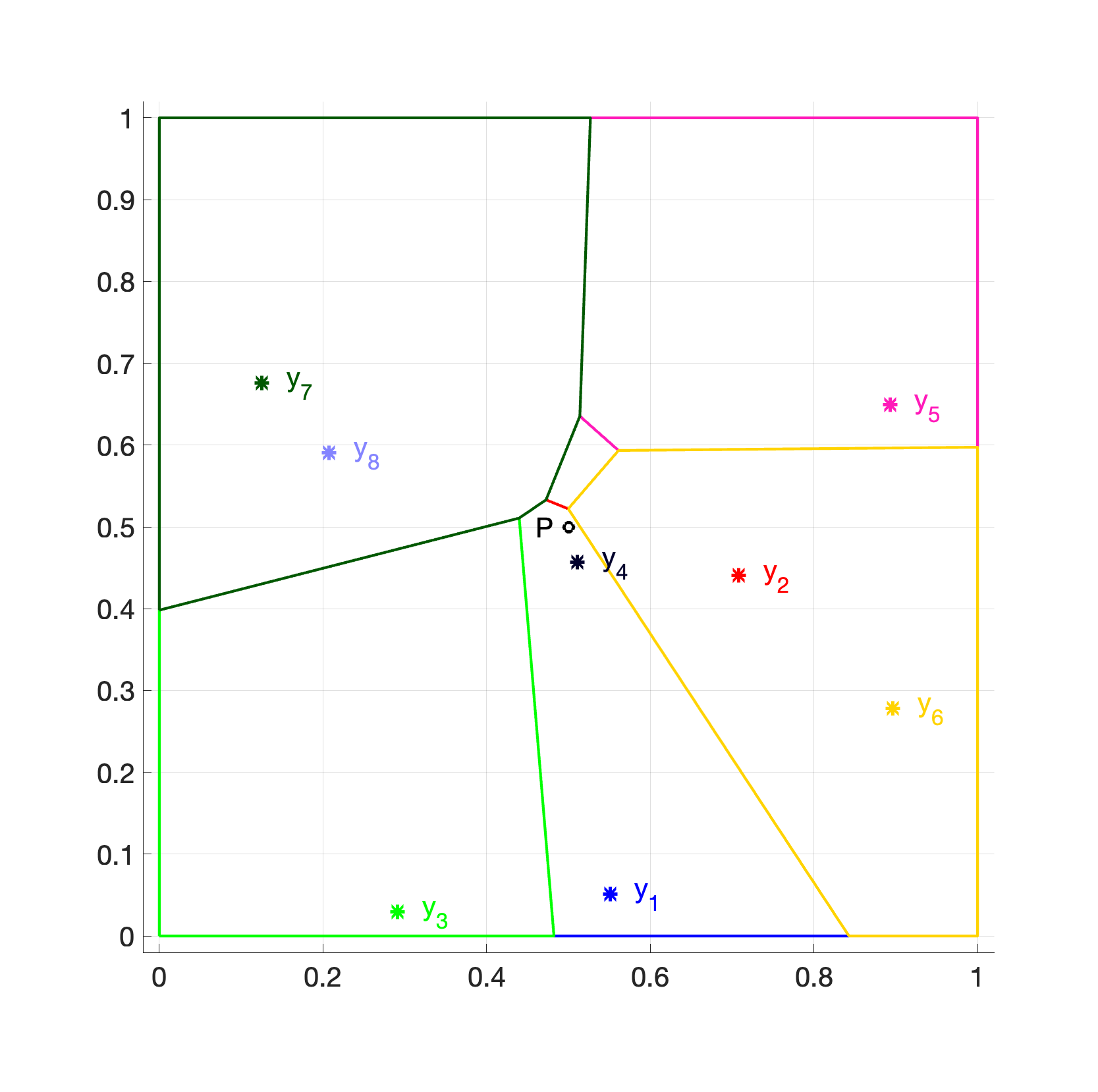}
        \caption{$t=1$}
    \end{subfigure}
    \caption{Time evolution of Laguerre cells with 8 random points and uniform measure}
    \label{fig:prob3_2d_unif}
\end{figure}

Table \ref{tab:prob4_2d} reports the error and runtime for the 2-d solver \eqref{prob4_ODE} with $\dd\mu(x) = \dd x$ and $\dd\rho(x) = 3.3508e^{-10(x_1-0.5)^2-10(x_2-0.5)^2}\dd x_1\dd x_2$. The error decreases as \(t\) is refined, showing empirical convergence close to first order for small \(N\) but deteriorating to sublinear for the larger \(N\) values; runtime grow steeply with both \(N\) and \(1/\Delta t\).

\begin{table}[ht]
    \centering\scalebox{0.75}{\setstretch{2.5}
    \begin{tabular}{||c||c|c|c|c||}\hline\hline
        $\Delta\mathbf{ t}$ & $N = 3$ & $N = 6$ & $N = 12$ & $N = 24$\\\hline\hline
        $10^{-1}$ & $1.96*10^{-2}$ ($  2.31$ sec.) & $1.52*10^{-2}$ ($ 17.06$ sec.) & $2.53*10^{-2}$ ($  47.37$ sec.) & $1.01*10^{-1}$ ($  96.02$ sec.) \\\hline
        $10^{-2}$ & $8.93*10^{-4}$ ($ 20.53$ sec.) & $5.92*10^{-4}$ ($ 78.82$ sec.) & $2.24*10^{-3}$ ($ 175.36$ sec.) & $5.04*10^{-2}$ ($ 467.69$ sec.) \\\hline
        $10^{-3}$ & $8.44*10^{-5}$ ($216.23$ sec.) & $5.23*10^{-5}$ ($725.76$ sec.) & $8.73*10^{-4}$ ($1481.89$ sec.) & $4.98*10^{-2}$ ($4290.78$ sec.) \\\hline
        \hline
    \end{tabular}}
    \caption{Error and time for \eqref{prob4_ODE} with $\dd\mu(x) = \dd x$ and\\ $\dd\rho(x) = 3.3508e^{-10(x_1-0.5)^2-10(x_2-0.5)^2}\dd x_1\dd x_2$}\label{tab:prob4_2d}
\end{table}

Finally, we solve the problem when target points are placed along a scaled parabola using the IVP-s \eqref{prob1_ODE} and \eqref{prob2_ODE}:
\begin{gather*}
    \text{Scaled parabola:} \quad y(t) = \icol{t \\ \left(\frac{t}{e}\right)^2}, \quad t \in [0, 1]\ .
\end{gather*}
For a given value of $N$, the target points are distributed uniformly with respect to the parameter $t$ over the interval $[0,1]$. In \cite{halim2025_SDM21D} it was shown that, in the unregularized case, this problem admits a nested structure; in terms of Laguerre cells, the triple intersections of successive cells are empty:
\[
    \mathrm{Lag}_{i-1}(\mathbf{v}) \cap \mathrm{Lag}_i(\mathbf{v}) \cap \mathrm{Lag}_{i+1}(\mathbf{v}) = \emptyset\ , \ i=2,3,\dots, N-1\ .
\]
Table \ref{tab:ScalPar_2d} reports the numerical results obtained with the ODE solvers and illustrates the second-order convergence in $t$. A comparison with Newton's method and with the sequential method (which directly exploits the nested structure), as reported in Table 1 of \cite{halim2025_SDM21D}, indicates an approximately order-of-magnitude disadvantage in computational time for the ODE-based approach. This behavior is to be expected: the initial-value formulation provides, alongside the terminal solution, a continuous evolution of the Laguerre cells, tracing the transition from the entropy-regularized state to the solution of the original problem (see Figure \ref{fig:Scal_Par}); such transient information is not produced by Newton's method or sequential algorithm.

\begin{table}[ht]
    \centering\scalebox{0.75}{\setstretch{2.5}
    \begin{tabular}{||c||c|c|c|c||}\hline\hline
        $\Delta\mathbf{ t}$ & $N = 3$ & $N = 6$ & $N = 12$ & $N = 24$\\\hline\hline
 
        \multicolumn{5}{c}{\Large IVP \eqref{prob1_ODE}} \vspace{0.25em}\\\cline{1-5}\cline{1-5}
        $10^{-1}$ & $1.24*10^{-2}$ ($ 0.90$ sec.) & $2.90*10^{-2}$ ($  0.65$ sec.) & $1.82*10^{-2}$ ($  2.90$ sec.) & $1.91*10^{-2}$ ($  13.96$ sec.) \\\hline
        $10^{-2}$ & $6.96*10^{-7}$ ($ 6.55$ sec.) & $4.95*10^{-4}$ ($ 10.45$ sec.) & $4.02*10^{-3}$ ($ 39.93$ sec.) & $7.37*10^{-3}$ ($ 196.37$ sec.) \\\hline
        $10^{-3}$ & $9.62*10^{-5}$ ($74.80$ sec.) & $2.45*10^{-6}$ ($144.11$ sec.) & $4.72*10^{-6}$ ($460.54$ sec.) & $6.84*10^{-4}$ ($2113.70$ sec.) \\\hline
        \hline

        \multicolumn{5}{c}{\Large IVP \eqref{prob2_ODE}} \vspace{0.25em}\\\cline{1-5}\cline{1-5}
        $10^{-1}$ & $1.40*10^{-2}$ ($ 0.62$ sec.) & $3.04*10^{-2}$ ($  1.58$ sec.) & $1.36*10^{-2}$ ($  6.51$ sec.) & $1.57*10^{-2}$ ($  31.07$ sec.) \\\hline
        $10^{-2}$ & $5.03*10^{-4}$ ($ 8.03$ sec.) & $1.60*10^{-3}$ ($ 21.46$ sec.) & $3.65*10^{-3}$ ($ 84.35$ sec.) & $6.43*10^{-3}$ ($ 411.69$ sec.) \\\hline
        $10^{-3}$ & $1.83*10^{-3}$ ($94.02$ sec.) & $2.96*10^{-5}$ ($246.72$ sec.) & $3.85*10^{-5}$ ($924.54$ sec.) & $5.63*10^{-4}$ ($4357.13$ sec.) \\\hline
        \hline
    \end{tabular}}
    \caption{Error and runtime for scaled parabola problem with uniform measure}\label{tab:ScalPar_2d}
\end{table}

\begin{figure}[ht]
    \centering
    \begin{subfigure}{.19\textwidth}
        \centering
        \includegraphics[width=\linewidth]{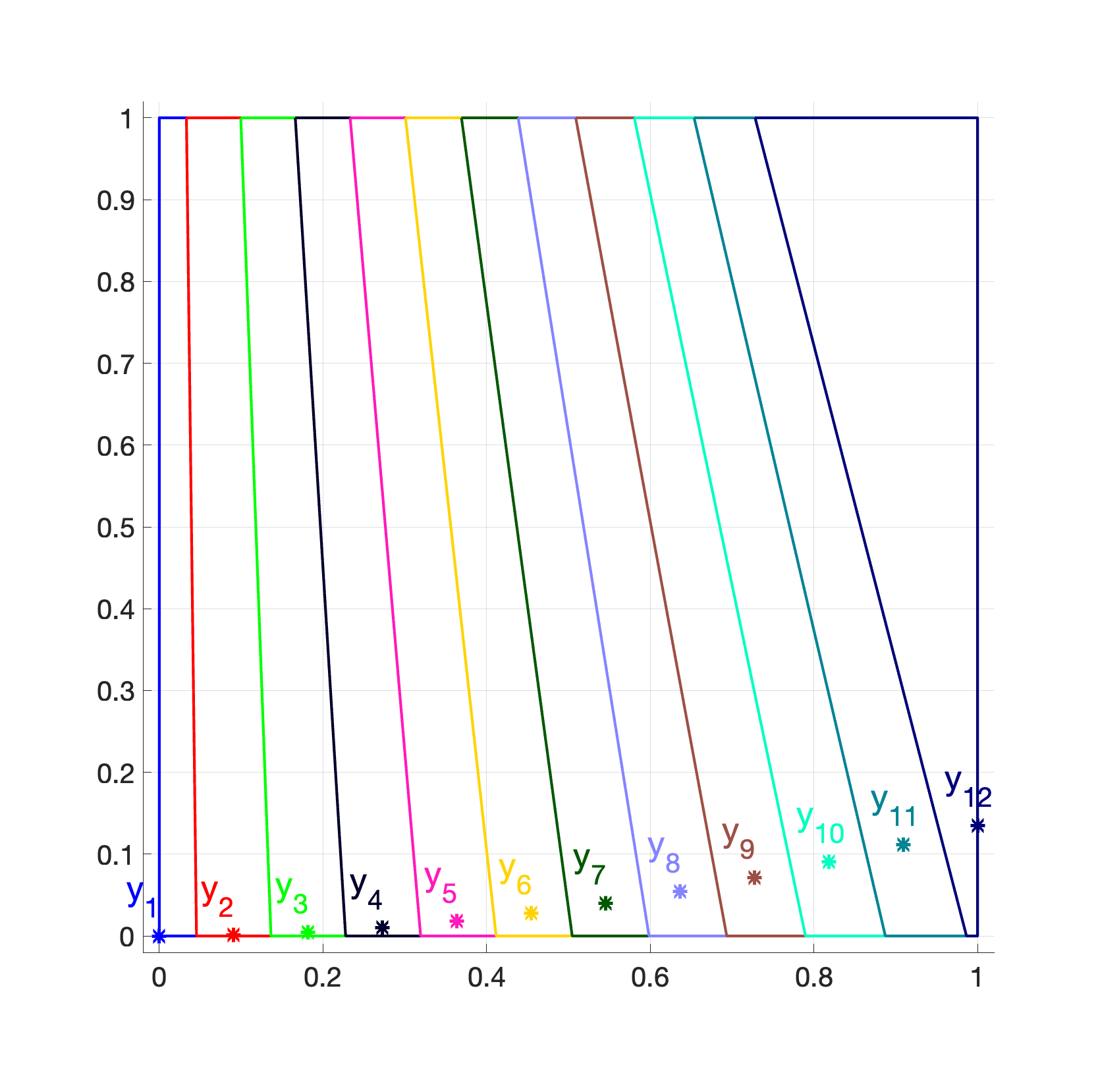}
        \caption{$t=0$}
    \end{subfigure}
    \begin{subfigure}{.19\textwidth}
        \centering
        \includegraphics[width=\linewidth]{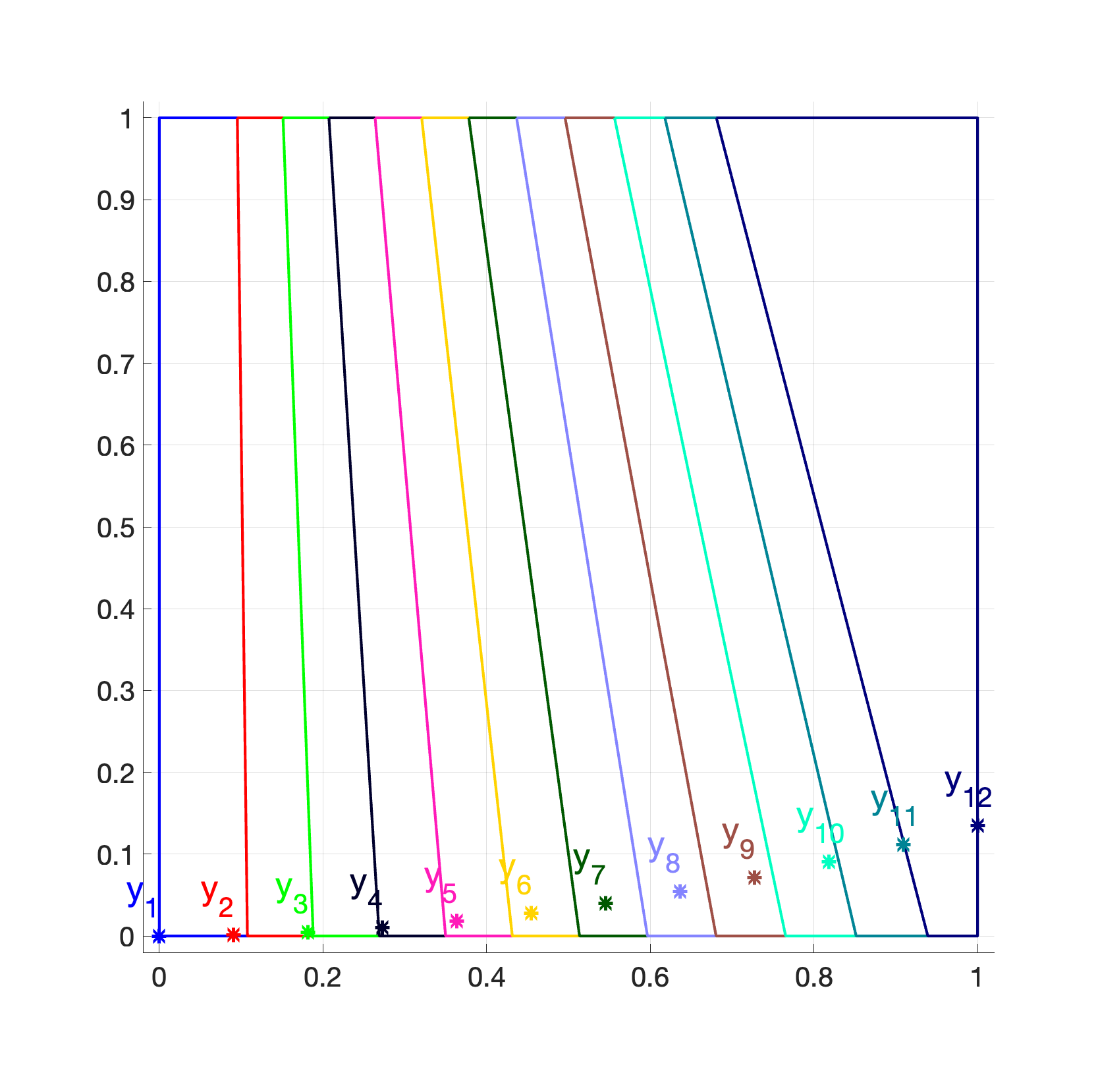}
        \caption{$t=0.25$}
    \end{subfigure}
    \begin{subfigure}{.19\textwidth}
        \centering
        \includegraphics[width=\linewidth]{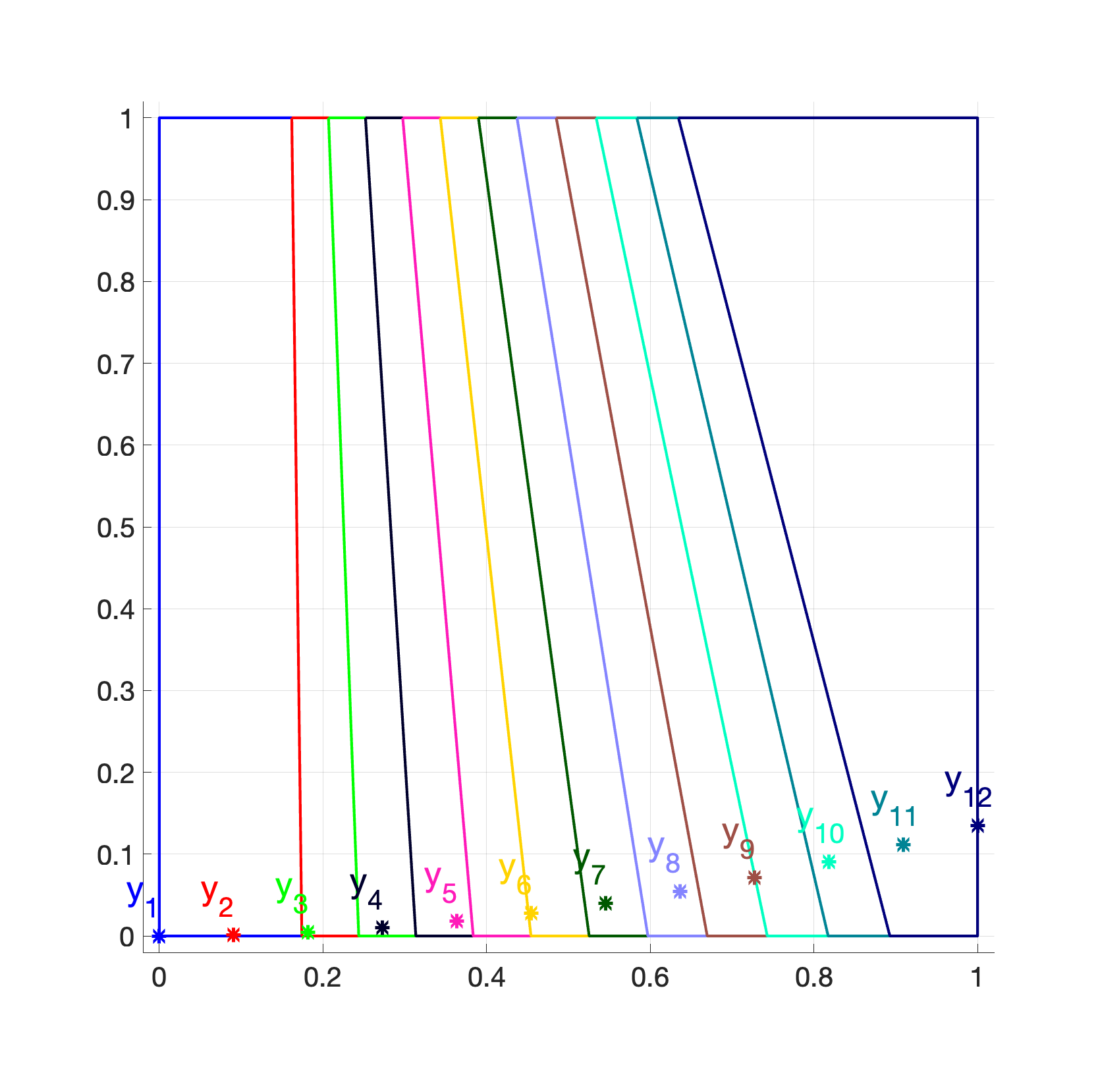}
        \caption{$t=0.5$}
    \end{subfigure}
    \begin{subfigure}{.19\textwidth}
        \centering
        \includegraphics[width=\linewidth]{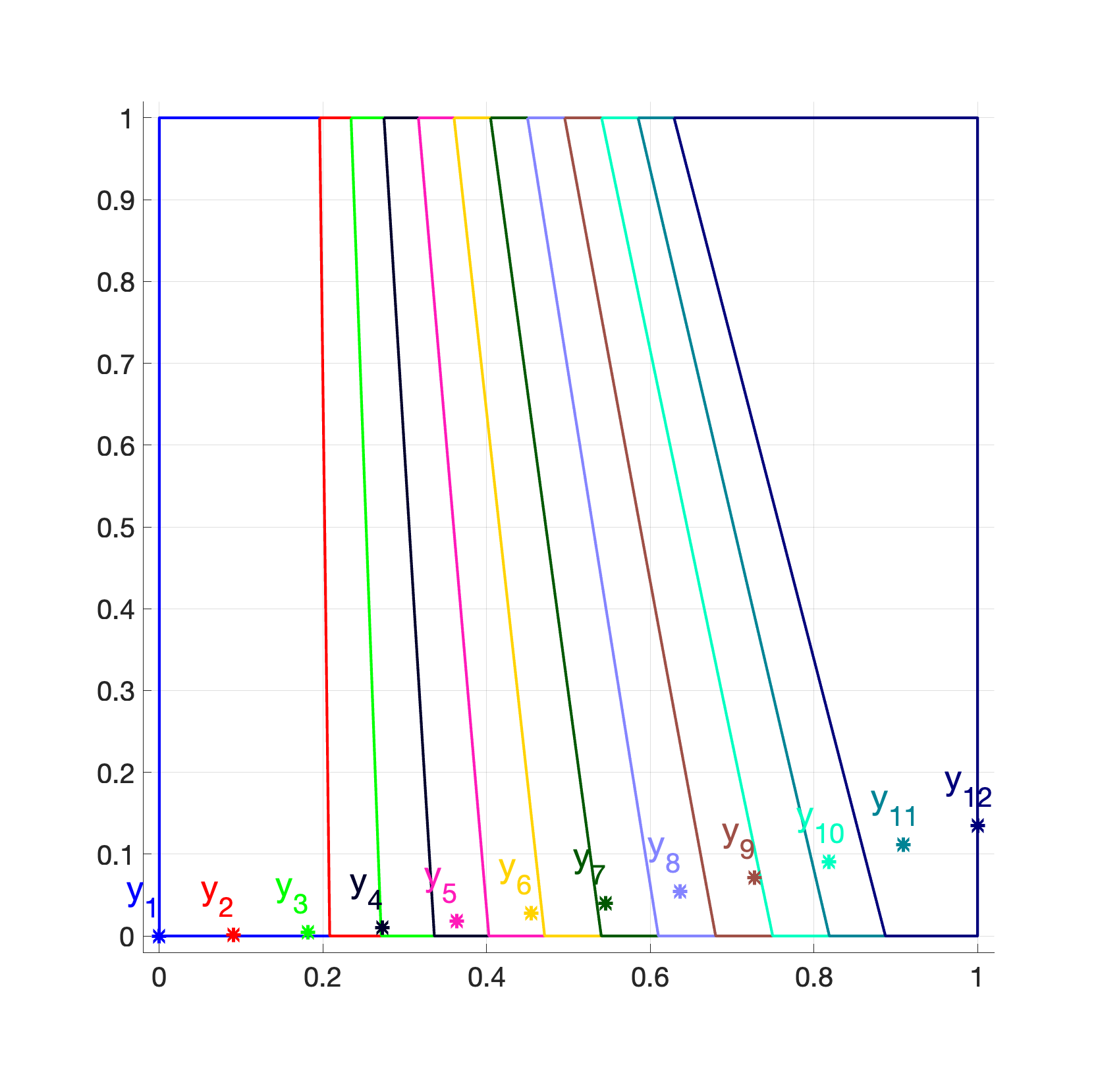}
        \caption{$t=0.75$}
    \end{subfigure}
    \begin{subfigure}{.19\textwidth}
        \centering
        \includegraphics[width=\linewidth]{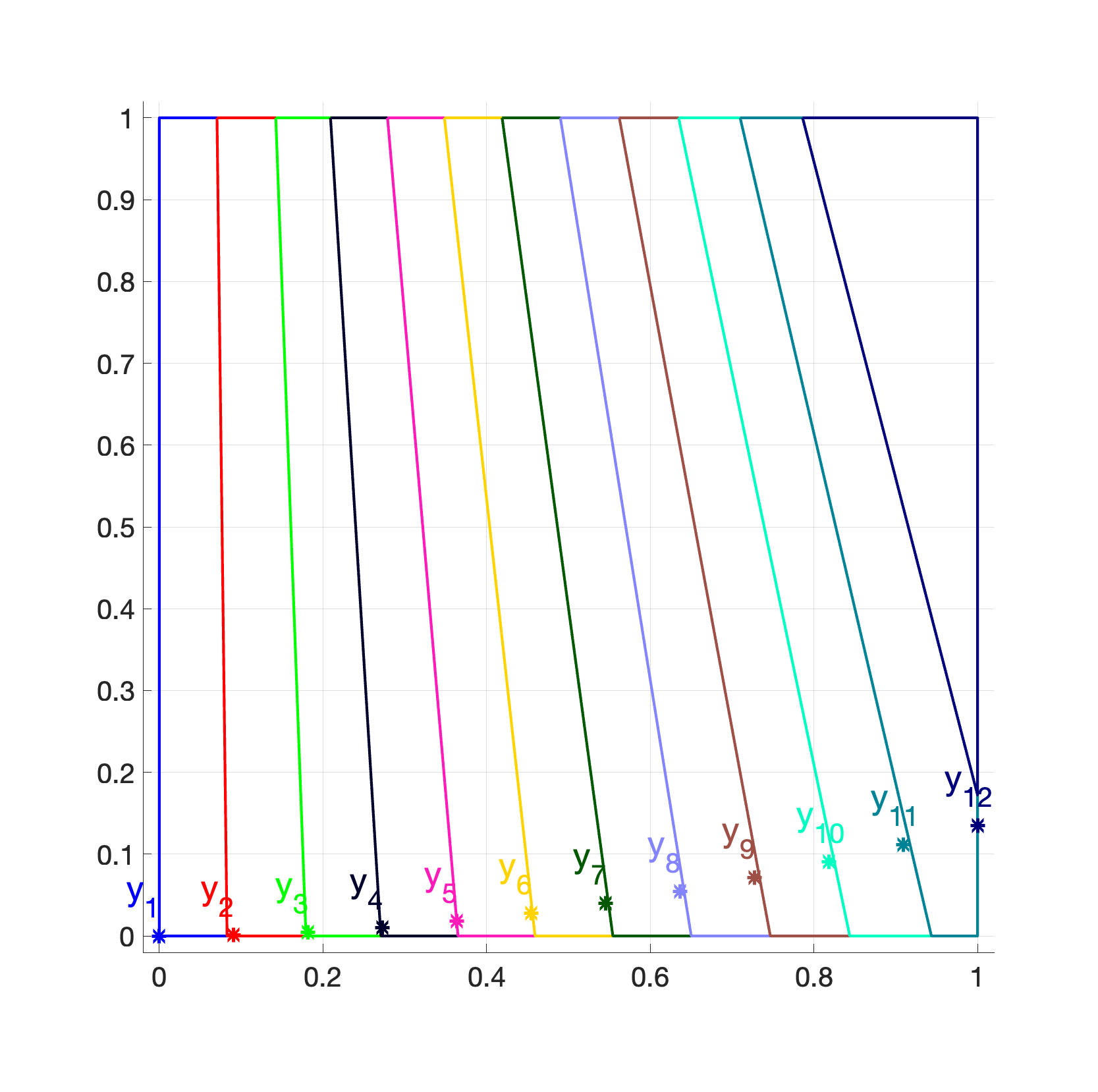}
        \caption{$t=1$}
    \end{subfigure}
    \caption{Time evolution of Laguerre cells with $12$ target points along the scaled parabola and a uniform measure, computed using the \eqref{prob1_ODE} solver}
    \label{fig:Scal_Par}
\end{figure}

\subsection*{Acknowledgments}
A.C. has been partially supported by the UDOPIA doctoral program, as well as the Agence nationale de la recherche, through the ANR project GOTA (ANR-23-CE46-0001) and the PEPR PDE-AI project (ANR-23-PEIA-0004).

\noindent L.N. benefited from the support of the FMJH Program PGMO and from the ANR project GOTA (ANR-23-CE46-0001). 
 
\noindent D.O. gratefully acknowledge that this research was supported in part by the Pacific Institute for the Mathematical Sciences.
 
\noindent B.P. is pleased to acknowledge the support of Natural Sciences and
Engineering Research Council of Canada Discovery Grant number  04864-2024.
\printbibliography

\end{document}